\newtheorem{theorem}{Theorem}[section]
\newtheorem{lemma}[theorem]{Lemma}
\newtheorem{corollary}[theorem]{Corollary}
\theoremstyle{definition}
\newtheorem{definition}[theorem]{Definition}
\theoremstyle{remark}
\numberwithin{equation}{section}
\newcommand{\nmod}[1]{\,\,\text{\rm mod}\,\,#1}
\newcommand{\quand}{\quad \text{and} \quad}
\def\rank{{ \normalfont \text{rank} \,}}
\def\im{{ \normalfont \text{im}\,}}
\def\rem{{\normalfont \text{rem}}}
\newcommand{\floor}[1]{\left \lfloor #1 \right \rfloor}
\newcommand{\ceil}[1]{\left \lceil #1 \right \rceil}
\def\bfa{{\mathbf a}}
\def\bfb{{\mathbf b}}
\def\bfc{{\mathbf c}}
\def\bfd{{\mathbf d}} \def\bfe{{\mathbf e}}
\def\bfv{{\mathbf v}}
\def\bfx{{\mathbf x}}
\def\bfy{{\mathbf y}}
\def\bfJ{{\mathbf J}}
\def\bfZ{{\mathbf Z}}
\def\N{{\mathbb N}}  
\def\R{{\mathbb R}}
\def\Z{{\mathbb Z}}
\def\grm{{\mathfrak m}}\def\grM{{\mathfrak M}}
\def\grN{{\mathfrak N}}
\def\grS{{\mathfrak S}}
\def\a{{\alpha}} \def\bfalp{{\boldsymbol \alpha}} 
\def\b{{\beta}}  
\def\gam{{\gamma}} 
\def\bfgam{{\boldsymbol \gam}}
\def\bfxi{{\boldsymbol \xi}}
\def\eps{\varepsilon}
\def\le{\leqslant} \def\ge{\geqslant}
\def\d{{\,{\rm d}}}
\begin{document}
\title[K-Multimagic Squares ]{A Circle Method Approach to \\ K-Multimagic Squares}
\author[Daniel Flores]{Daniel Flores}
\address{Department of Mathematics, Purdue University, 150 N. University Street, West 
Lafayette, IN 47907-2067, USA}
\email{flore205@purdue.edu}
\subjclass[2020]{11D45, 11D72, 11P55, 11E76, 11L07, 05B15, 05B20}
\keywords{Hardy-Littlewood method, additive forms in differing degrees, magic squares, multimagic squares.}
\date{}
\dedicatory{}
\begin{abstract}
In this paper we investigate $K$\emph{-multimagic squares} of order $N$, these are $N \times N$ magic squares which remain magic after raising each element to the $k$th power for all $2 \le k \le K$. Given $K \ge 2$, we consider the problem of establishing the smallest integer $N_2(K)$ for which there exists \emph{nontrivial} $K$-multimagic squares of order $N_2(K)$. Previous results on multimagic squares show that $N_2(K) \le (4K-2)^K$ for large $K$. Here we utilize the Hardy-Littlewood circle method and establish the bound 
\[N_2(K) \le 2K(K+1)+1.\] 

Via an argument of Granville's we additionally deduce the existence of infinitely many \emph{nontrivial} prime valued $K$-multimagic squares of order $2K(K+1)+1$.
\end{abstract}
\maketitle

\section{Introduction}\label{sec:intro}

A $N \times N$ matrix $\bfZ = (z_{i,j})_{1 \le i,j \le N}$ is a \emph{magic square} of order $N$ if the sum of the entries in each of its rows, columns, and two main diagonals are equal. The concept of magic squares has fascinated mathematicians and laymen for thousands of years. Although the study of these objects dates back millennia, there are still many unresolved problems concerning magic squares. One of the most famous problems in this area concerns the existence of a $3 \times 3$ magic square where each element is a distinct square number. This problem became popularized by Martin Gardner in 1996, and was listed in Richard Guy’s book, \emph{Unsolved Problems in Number Theory}. 

One may also investigate problems related to so called \emph{multimagic} squares. Given $K \ge 2$ we say a matrix $\bfZ \in \Z^{N \times N}$ is a $K$-multimagic square of order $N$ or \textbf{MMS}$(K,N)$ for short if the matrices
\[\bfZ^{\circ k} := (z_{i,j}^k)_{1 \le i,j \le N},\]
remain magic squares for $1 \le k \le K$. Before we can state our problem of interest we must first discuss \emph{trivial} multimagic squares.

It is clear any multiple of the $N \times N$ matrix of all ones is trivially a \textbf{MMS}$(K,N)$ for every $K \ge 2$. However, this is not the only family of ``trivial'' multimagic squares one must consider. Suppose $\bfZ$ is an $N \times N$ matrix in which every row, column, and both main diagonals contain precisely $N$ distinct symbols. Such matrices are known as doubly diagonalized Latin squares of order $N$, or \textbf{DDLS}($N$) for short. \textbf{DDLS}($N$) are known to exist for all $N \ge 4$, see \cite{Gergely1974}. Then for $N \ge 4$ any mapping of these $N$ symbols to the integers yields a \textbf{MMS}$(K,N)$ for every $K \ge 2$. Consideration of these ``trivial'' \textbf{MMS}$(K,N)$ motivates the following definition.

\begin{definition}
    For $K \ge 2$ and $N \in \N$ a \textbf{MMS}$(K,N)$ is called \emph{trivial} if it utilizes $N$ or less distinct integers.
\end{definition}

We begin by observing that for $N = 1$ or $2$ the only \textbf{MMS}$(K,N)$ are those with every element being equal. Additionally it is known that every $3 \times 3$ magic square may be parametrized by three variables $a,b,c$ as follows 
\begin{equation*}
\bfZ(a,b,c) = \left[\begin{matrix}
    c-b & c+(a+b) & c-a \\
    c-(a-b) & c & c+(a-b) \\
    c+a & c-(a+b) & c+b \\
\end{matrix} \right].
\end{equation*}
Solving for $a,b,c$ such that $\bfZ(a,b,c)^{\circ 2}$ is a magic square one sees that the only solutions are those with $a = b = 0$. Thus for $K \ge 2$ and $1 \le N \le 3$ we conclude that the only \textbf{MMS}$(K,N)$ are trivial.

In this paper we investigate the minimal value $N_2(K)$ for which there exists a nontrivial \textbf{MMS}$(K,N_2(K))$. This type of question has been considered in the past by several authors (see \cite{BoyerSite,Derksen2007,TrumpSite,Zhang2013,Zhang2019}) via constructions of \emph{normal} multimagic squares. A multimagic square is said to be \emph{normal} if its elements consists of the integers $1,2,\ldots,N^2$. We give a brief overview of the best known results for nontrivial \textbf{MMS}$(K,N)$ below.
\begin{figure}[htbp]
    \centering
    \begin{tabular}{|c|c|c|} 
\hline  
$K$ & Upper bound on $N_2(K)$ & Attributed to \\ 
\hline  
2 & $6$ & J. Wroblewski \cite{BoyerSite} \\
3 & $12$ & W. Trump \cite{TrumpSite} \\
4 & $243$ & P. Fengchu \cite{BoyerSite} \\
5 & $729$ & L. Wen \cite{BoyerSite} \\
6 & $4096$ & P. Fengchu \cite{BoyerSite} \\
$K \ge 2$ & $(4K-2)^K$ & Zhang, Chen, and Li \cite{Zhang2019}\\
\hline  
\end{tabular}
\end{figure}

In this paper we establish via the Hardy-Littlewood circle method the following result.
\begin{theorem}\label{Thmnontriv}
    $N_2(K) \le 2K(K+1)+1$ for $K \ge 2$.
\end{theorem}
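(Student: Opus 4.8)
The plan is to realize a nontrivial \textbf{MMS}$(K,N)$ with $N = 2K(K+1)+1$ as an integer solution to a system of polynomial equations, and then count such solutions via the circle method, showing the main term dominates. First I would set up the Diophantine system: writing the common row/column/diagonal sums as free parameters, a matrix $\bfZ = (z_{i,j})$ is a \textbf{MMS}$(K,N)$ precisely when, for each $1 \le k \le K$, all $N$ row-power-sums, all $N$ column-power-sums, and the two diagonal-power-sums agree. Subtracting the first row's $k$th-power sum from each of the others eliminates the free parameter at each degree $k$, leaving a system of additive equations of degrees $1,2,\dots,K$ in the $N^2$ variables $z_{i,j}$. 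The number of equations is on the order of $K \cdot (2N+1)$, and the choice $N = 2K(K+1)+1$ is calibrated so that the number of variables $N^2$ comfortably exceeds the circle-method threshold (roughly the number of equations times the degree, as in Vinogradov-type or Birch-type bounds for additive systems in differing degrees).

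Next I would run the Hardy-Littlewood method on this system inside a box of side $P$. Because the equations are additive in each variable and arranged by degree, the generating function factors as a product over the $N^2$ entries of exponential sums $\sum_{z \le P} e(\alpha_1 z + \alpha_2 z^2 + \cdots + \alpha_K z^K)$ twisted by the appropriate integer coefficients dictated by which rows/columns/diagonals each entry participates in. The major-arc analysis produces the expected product of a singular series $\mathfrak{S}$ and a singular integral $\mathfrak{J}$ times $P^{N^2 - D}$, where $D = \frac{1}{2}K(K+1)\cdot(\text{number of constraint families})$ is the total ``weight'' of the system; the key points are that $\mathfrak{S} > 0$ (local solubility at every prime and at infinity, which here follows because the all-ones matrix and scalings give real and $p$-adic solutions of full rank) and that $\mathfrak{J} > 0$. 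I would then need to subtract off the contribution of the \emph{trivial} solutions — those using at most $N$ distinct integer values — and show this is of strictly smaller order than the main term, so that genuinely nontrivial \textbf{MMS}$(K,N)$ not only exist but are abundant (in fact positive proportion, or at least $\gg P^{N^2-D}$).

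The main obstacle, and the technical heart of the argument, will be the minor-arc estimate: one must show the minor-arc contribution is $o(P^{N^2-D})$. This requires good mean-value or pointwise bounds for the relevant exponential sums of mixed degree, and the quantitative form of these bounds is exactly what forces the inequality $N^2 \gg K^3$ (hence $N \gtrsim K^{3/2}$, consistent with $N = 2K(K+1)+1$). I would expect to invoke Weyl-type differencing together with Vinogradov's mean value theorem (in its now-sharp form) applied to the degree-$K$ exponential sum, or alternatively an argument tailored to additive systems in differing degrees in the spirit of work on such forms; the combinatorial structure of the coefficient matrix (each variable $z_{i,j}$ appearing in one row-equation, one column-equation, and at most two diagonal-equations) must be exploited to guarantee that enough variables are ``in general position'' to run the differencing. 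A secondary, more bookkeeping-heavy obstacle is verifying non-vanishing of the singular series uniformly and confirming that the trivial-solution locus, cut out by extra coincidence conditions $z_{i,j} = z_{i',j'}$, contributes a negligible amount; this is plausible since imposing such coincidences drops the dimension while the number of equations stays comparable, but it must be checked with care.
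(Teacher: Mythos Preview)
Your overall architecture matches the paper's: encode \textbf{MMS}$(K,N)$ as integer solutions to a diagonal system in degrees $1,\dots,K$ with a coefficient matrix $C^{\text{magic}}_N \in \Z^{2N \times N^2}$, apply the circle method to obtain $M_{K,N}(P) \sim c\,P^{N(N-K(K+1))}$, and note this dominates the $O(P^N)$ trivial solutions once $N > 2K(K+1)$. Two points, however, are genuine gaps rather than routine verifications.

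First, the all-ones matrix is \emph{not} a nonsingular solution. At $z_{i,j}\equiv c$ the degree-$k$ block of the Jacobian is $k c^{k-1} C^{\text{magic}}_N$, so the full Jacobian has rank at most $\operatorname{rank} C^{\text{magic}}_N = 2N$, far short of the required $2NK$. The paper instead uses a doubly diagonal Latin square with distinct integer symbols and devotes Section~\ref{sec:nonsingsoln} to reducing the Jacobian rank computation to a product of Vandermonde determinants, exploiting that the entries along each column of the Latin square are distinct. Producing a nonsingular local solution here is real work, not a formality.

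Second, you underestimate the structural obstacle on the minor arcs. The existing circle-method machinery for additive systems in differing degrees requires the coefficient matrix to be \emph{highly nonsingular} (every $2N\times 2N$ minor nonzero). The magic-square matrix fails this badly: for instance, the $2N$ columns indexed by the entries of any two full rows of the square have rank roughly $N$, not $2N$. Hence one cannot simply invoke Vinogradov's mean value theorem after a standard decoupling. The paper's core contribution is to (i) formulate a weaker ``domination'' hypothesis on $C$ --- essentially a linear lower bound on $\operatorname{rank} C_J$ in terms of $|J|$ --- under which the Weyl-type bound, mean-value bound, singular-series bound, and singular-integral bound all still go through (Section~\ref{sec:circmethod}), and then (ii) verify via a delicate combinatorial rank analysis (Lemmas~\ref{RankGrowthLemma} and~\ref{CombinatorialLemma}) that $C^{\text{magic}}_N$ satisfies it. Your phrase ``enough variables in general position'' gestures at the right concern but does not identify that the standard hypothesis actually fails or how to replace it; this is where most of the paper's effort goes. (Relatedly, your threshold heuristic $N^2 \gg K^3$, i.e.\ $N \gtrsim K^{3/2}$, does not match the actual constraint $N^2 > 2N\cdot K(K+1)$ that drives $N > 2K(K+1)$.)
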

This beats previously known results as soon as $K \ge 4$ and shows that $N_2(K)$ grows at most quadratically in $K$ rather than potentially exponential in $K$. One may prove an analogous statement for prime valued \textbf{MMS}$(K,N)$ by reapplying the entirety of the circle method where we detect prime solutions instead of integer solutions. This, however, is not necessary as via an argument due to Granville in \cite{Granville2008} one may apply the Green-Tao theorem and deduce the following.
\begin{corollary}
    Given $K \ge 2$ there exists infinitely many nontrivial prime valued \textbf{MMS}$(K,N)$ for every $N > 2K(K+1)$.
\end{corollary}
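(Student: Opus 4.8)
The plan is to deduce the Corollary from \Cref{Thmnontriv} by the argument of Granville~\cite{Granville2008}, whose two ingredients are the translation--dilation invariance of the multimagic equations and the Green--Tao theorem on long arithmetic progressions of primes. First I would record the invariance. The conditions defining an \textbf{MMS}$(K,N)$ assert that for each $k$ with $1 \le k \le K$ the power sum $\sum_{(i,j)\in L} z_{i,j}^k$ takes one common value $S_k$ as $L$ ranges over the $2N+2$ lines of the square (the $N$ rows, the $N$ columns, and the two main diagonals); put also $S_0 = N$. If $\bfZ$ is such a square and $a,b\in\Z$ with $a\neq 0$, then expanding $(az_{i,j}+b)^k$ by the binomial theorem writes $\sum_{(i,j)\in L}(az_{i,j}+b)^k = \sum_{r=0}^{k}\binom{k}{r}a^r b^{k-r}S_r$, a quantity independent of $L$, for every $1\le k\le K$. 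Hence $a\bfZ + b\bfJ$ is again an \textbf{MMS}$(K,N)$, and since $z\mapsto az+b$ is injective it uses exactly as many distinct integers as $\bfZ$, so it is nontrivial whenever $\bfZ$ is.

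Next I would fix $N > 2K(K+1)$ and invoke the circle method. Here I would use not merely the bare inequality $N_2(K)\le 2K(K+1)+1$ but the fact, which the proof of \Cref{Thmnontriv} delivers, that for \emph{every} $N\ge 2K(K+1)+1$ the number of \textbf{MMS}$(K,N)$ with entries in $[1,P]$ obeys an asymptotic formula whose main term outgrows the contribution of trivial squares; in particular a nontrivial integer \textbf{MMS}$(K,N)$ exists. Fixing one such $\bfZ$ and replacing it by $\bfZ + c\bfJ$ for a suitable $c\in\Z$, I may assume all its entries lie in $\{1,\dots,V\}$ for some $V\in\N$, and by nontriviality at least $N+1$ of them are distinct.

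Then I would make the entries prime. By the Green--Tao theorem the primes contain arbitrarily long arithmetic progressions; applying this with progression length $n(V+1)$ and cutting the progression into $n$ consecutive blocks of $V+1$ terms shows that, for every $n$, the primes contain $n$ pairwise disjoint $(V+1)$-term arithmetic progressions, each of the form $b, b+a, \dots, b+Va$ with $a\in\Z_{>0}$. Thus there are infinitely many admissible pairs $(a,b)$, and among them infinitely many with $b$ arbitrarily large. For any such $(a,b)$ the matrix $\bfZ' = a\bfZ + b\bfJ$ has every entry of the form $b + z_{i,j}a$ with $1\le z_{i,j}\le V$, hence prime, and by the first step it is a nontrivial \textbf{MMS}$(K,N)$. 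Letting $b\to\infty$ forces the entries of $\bfZ'$ to infinity, so distinct admissible $(a,b)$ (with $b$ large) produce pairwise distinct squares, yielding infinitely many nontrivial prime valued \textbf{MMS}$(K,N)$.

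I do not expect a genuine obstacle beyond \Cref{Thmnontriv} itself: the deduction above is elementary. The one point that needs attention is the input flagged in the second paragraph, namely that nontrivial \textbf{MMS}$(K,N)$ exist for \emph{every} admissible $N$ and not only for the minimal one; this is precisely why the proof of \Cref{Thmnontriv} should be phrased as an asymptotic count of bounded integer multimagic squares, together with the routine verification that the trivial ones (images of \textbf{DDLS}($N$) and scalar multiples of $\bfJ$) form a family of strictly smaller order than the main term.
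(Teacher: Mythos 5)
Your proposal is correct and is precisely the argument the paper invokes implicitly: the translation--dilation invariance of the multimagic system (via the binomial theorem and the fact that all $2N+2$ lines have the same cardinality $N$) combined with the Green--Tao theorem, which is exactly Granville's method cited at that point. The paper does not spell out the deduction, so your write-up simply fills in what is left as a citation, and the use of Theorem~\ref{MainThm} rather than only the bare bound on $N_2(K)$ correctly handles the requirement that nontrivial squares exist for \emph{every} $N>2K(K+1)$.
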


One may potentially generalize these results to multimagic $d$-dimensional hypercubes given one works out the analogues of sections \ref{sec:multmagicsquare} and \ref{sec:nonsingsoln} for the $d$-dimensional case. Assuming one does this, we expect that the circle method will yield the bound
\[N_d(K) \ll_d K^2,\]
where $N_d(K)$ is the minimal number for which there exists multimagic $d$-dimensional hypercubes which use $N^{d-1}+1$ or more distinct integers.

\section{Overview of the Paper}\label{sec:overview}
Given $K \ge 2$ and $N > 2K(K+1)$, we let $M_{K,N}(P)$ denote the number of \textbf{MMS}$(K,N)$ with entries satisfying
\[\max_{1 \le i,j \le N}|z_{i,j}| \le P.\]
One easily sees that the number of trivial \textbf{MMS}$(K,N)$ counted by $M_{K,N}(P)$ is $O_N(P^N)$, thus if one wishes to establish the existence of infinitely many nontrivial \textbf{MMS}$(K,N)$ it is enough to show that
\begin{equation}\label{nontrivlimiteq}
    \frac{M_{K,N}(P)}{P^N} \to \infty \quad \text{as}\quad N \to \infty.
\end{equation}

We begin by considering a general diagonal system of equations in differing degrees. Let $C = (c_{i,j})_{\substack{1 \le i \le r \\ 1 \le j \le s}} \in \Z^{r \times s}$ be given, and consider the diagonal system
\begin{equation}\label{GenSys}
    \sum_{1 \le j \le s}c_{i,j}x_j^k = 0 \quad  (1 \le i \le r, \quad 1 \le k \le K).
\end{equation}
We define $R_K(P;C)$ to be the number of solutions $\bfx \in \Z^s$ to (\ref{GenSys}) where $\max_{j}|x_j| \le P$. This class of problems has been investigated in the past by various authors (see \cite{Brandes2017,Brandes2021}). However, in their application of the circle method they require the $r \times s$ matrix of coefficients $C$ to be \emph{highly nonsingular}, i.e., for all $J \subset \{1,\ldots,s\}$ with $|J| = r$ one should have
\begin{equation*}
    \det \left(c_{i,j}\right)_{\substack{1 \le i \le r \\ j \in J}} \neq 0.
\end{equation*}
Upon examination of these methods, however, one sees that a slightly weaker condition on the matrix $C$ would suffice. This weaker condition has been used previously by Br\"{u}dern and Cook \cite{Brudern1992}, who investigated diagonal systems of a fixed degree $k$. This is in fact crucial because, upon investigation, one sees that the matrix of coefficients associated to multimagic squares is certainly not highly nonsingular. 

These considerations lead us to define a notion of when a matrix $C$ \emph{dominates} a function. We establish in section \ref{sec:circmethod} an asymptotic formula for $R_K(P;C)$ provided $C$ dominates an appropriate function. Before stating our results we must establish some notation. 

For a given $r \times s$ matrix $C = [\bfc_1,\ldots,\bfc_s]$ and any set $J \subset \{1,\ldots,s\}$, we denote by $C_J$ the submatrix of $C$ consisting of the columns $\bfc_j$ where $j \in J$. For any $a \in \Z$ and $b \in \N$ we denote by $ \rem (a,b)$ the remainder of $a$ modulo $b$ considered as an integer between $0$ and $b-1$. 
\begin{definition}
    We say that a matrix $C$ dominates a function $f:\N \to \R^+$ whenever the inequality
\begin{equation*}
    \text{rank}(C_J) \ge \min\left\{  f(|J|),r\right\},
\end{equation*}
holds for all $J \subset \{1,\ldots,s\}$.
\end{definition}

\begin{theorem}\label{CircMethThm}
    Let $K \ge 2$ and suppose that $C \in \Z^{r \times s}$ satisfies $s > rK(K+1)$. Then, if $C$ dominates the function
    \[F(x) = \max\left\{ \frac{x- \rem (s,r)}{\floor{\frac{s}{r}}}, \frac{x- \rem (s-1,r)}{\floor{\frac{s-1}{r}}} \right\},\]
    one has that
    \[R_K(P;C) = P^{s-\frac{rK(K+1)}{2}}\left(\sigma_K(C) + o(1)  \right),\]
    where $\sigma_K(C) \ge 0$ is a real number depending only on $K$ and $C$. Additionally $\sigma_K(C) > 0$ if there exists nonsingular real and $p$-adic solutions to the system (\ref{GenSys}).
\end{theorem}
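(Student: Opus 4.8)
The plan is to prove \cref{CircMethThm} by the Hardy--Littlewood circle method, adapting the circle method for multidegree diagonal systems of Brandes \cite{Brandes2017,Brandes2021} so that it runs under the weaker ``dominance'' hypothesis on $C$, in the manner in which Br\"udern and Cook \cite{Brudern1992} handled single--degree systems, and feeding in the Vinogradov mean value theorem in its optimal form (Wooley; Bourgain--Demeter--Guth) where Hua's inequality sufficed for a single degree. Write $\bfalp = (\alpha_{i,k})_{1 \le i \le r,\ 1 \le k \le K} \in \T^{rK}$ and, for a column $\bfc = (c_1,\ldots,c_r)^{T} \in \Z^{r}$, put
\[
f(\bfalp;\bfc) \;=\; \sum_{|x| \le P} e\!\Bigl( \sum_{k=1}^{K} \Bigl( \sum_{i=1}^{r} c_i \alpha_{i,k} \Bigr) x^{k} \Bigr),
\]
so that orthogonality gives $R_K(P;C) = \int_{\T^{rK}} \prod_{j=1}^{s} f(\bfalp;\bfc_j)\,\d\bfalp$. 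Each $f(\bfalp;\bfc_j)$ is a one--dimensional Weyl sum of degree $K$, and the key structural fact is that if $J \subseteq \{1,\ldots,s\}$ satisfies $\rank(C_J)=r$ then a unimodular change of variables on an invertible $r \times r$ subblock of $C_J$ decouples $\prod_{j\in J} f(\bfalp;\bfc_j)$ into a product governed by $r$ essentially independent copies of the generic degree--$K$ Weyl sum $g(\bfgam)=\sum_{|x|\le P} e(\gam_1 x + \cdots + \gam_K x^K)$, whose moments obey $\int_{\T^{K}}|g(\bfgam)|^{2t}\,\d\bfgam \ll P^{2t - K(K+1)/2 + \eps}$ once $2t \ge K(K+1)$. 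We dissect $\T^{rK}$ into major arcs $\grM$ about rationals with small denominator and their minor--arc complement $\grm$.

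On $\grm$ the hypothesis that $C$ dominates $F$ does the combinatorial work. Unwinding the definition, the first term of $F$ forces $\rank(C_J)$ to drop by at most $m-1$ whenever fewer than $m\floor{s/r}$ columns of $C$ are deleted, while the second term yields the same statement with $s$ replaced by $s-1$; by a greedy matroid--covering argument this produces, after permuting columns, a partition of all but one column into $r$ blocks $J_1,\ldots,J_r$ with $|J_l|\ge \floor{(s-1)/r}$ and $\rank(C_{J_l})=r$ for every $l$, and here $\floor{(s-1)/r}\ge K(K+1)$ since $s\ge rK(K+1)+1$. After a pruning step reducing to $\bfalp\in\grm$ at which some Weyl sum among the blocks satisfies $f(\bfalp;\bfc_j)\ll P^{1-\del+\eps}$ for a suitable $\del=\del(K)>0$ --- the dominance condition keeping the leading coefficients of a full--rank block from being simultaneously well approximable off the major arcs --- one removes that single factor and bounds the remaining $s-1$ Weyl sums by H\"older's inequality across $J_1,\ldots,J_r$, estimating the contribution of each block by decoupling along an invertible $r\times r$ subblock and invoking the Vinogradov mean value theorem. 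This yields
\[
\int_{\grm} \prod_{j=1}^{s} f(\bfalp;\bfc_j)\,\d\bfalp \;\ll\; P^{\,s - \frac{rK(K+1)}{2} - \del'}
\]
for some $\del'>0$; here $s>rK(K+1)$ is precisely what makes each of the $r$ blocks carry at least $\floor{(s-1)/r}\ge K(K+1)$ Weyl sums --- that is, lie at or past Vinogradov's critical exponent --- the one peeled--off column accounting for the strictness of the inequality. Pinning down this combinatorial extraction, and the accompanying pruning, is where I expect the principal difficulty to lie.

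On the major arcs the usual approximation $f(\bfalp;\bfc_j)\approx q^{-1}S(q,\bfa;\bfc_j)\,v(\bfbet;\bfc_j)$, with $S$ a complete exponential sum and $v$ an oscillatory integral, followed by summation over the major--arc data, gives
\[
\int_{\grM} \prod_{j=1}^{s} f(\bfalp;\bfc_j)\,\d\bfalp \;=\; \bigl(\grS(C)\,\grJ(C)+o(1)\bigr)\,P^{\,s-\frac{rK(K+1)}{2}},
\]
where $\grS(C)=\prod_p\chi_p(C)$ is the singular series and $\grJ(C)$ the singular integral; we set $\sig_K(C)=\grS(C)\,\grJ(C)$. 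Absolute convergence of $\grS(C)$ and of the truncated singular integral, and the estimate $\chi_p(C)=1+O(p^{-1-\del})$ for all but finitely many primes $p$, follow from a $p$--adic version of the same mean value bounds applied to the number of solutions of \eqref{GenSys} modulo $p^{h}$; this is where the first term of $F$ --- the one featuring $\floor{s/r}$ --- is used, since here all $s$ columns remain in play, $s>rK(K+1)$ again supplying the margin. As $\grS(C)$ and $\grJ(C)$ are manifestly non-negative, $\sig_K(C)\ge 0$.

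Finally, $\grJ(C)$ measures the density of real solutions of \eqref{GenSys} near the origin and so is positive exactly when that system has a nonsingular real solution with $\max_j|x_j|<1$, while $\chi_p(C)>0$ exactly when \eqref{GenSys} has a nonsingular $p$--adic solution, by Hensel's lemma, and $\chi_p(C)\ge \tfrac12$ for all sufficiently large $p$ by the estimate above. Hence, under the stated local solubility hypotheses, $\grS(C)=\prod_p\chi_p(C)>0$, and therefore $\sig_K(C)>0$. Adding the major-- and minor--arc contributions gives $R_K(P;C)=P^{\,s-rK(K+1)/2}\bigl(\sig_K(C)+o(1)\bigr)$, as claimed.
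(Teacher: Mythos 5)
Your overall plan coincides with the paper's: a Hardy--Littlewood dissection, a Weyl--type pruning on a single column, a mean value estimate on the remaining $s-1$ columns combined via H\"older, absolute convergence of the singular series and integral by kindred arguments, and positivity of $\sigma_K(C)$ from nonsingular local solutions. The substitution of Vinogradov's mean value theorem (in its sharp form) for Hua's inequality, and the use of a rank condition weaker than high nonsingularity in the spirit of Br\"udern--Cook, are also exactly the paper's moves.

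There is, however, a concrete error in your combinatorial extraction. You assert that dominance of $F$ yields, after discarding one column, a partition of the remaining columns into ``$r$ blocks $J_1,\dots,J_r$ with $|J_l|\ge \floor{(s-1)/r}$ and $\rank(C_{J_l})=r$,'' and you propose to H\"older across those $r$ blocks. That structure does not feed the mean value argument: applying H\"older with exponent $r$ across $r$ fat blocks of rank $r$ does not produce the $K(K+1)$-fold power of an invertible $r\times r$ block that decoupling and the Vinogradov mean value theorem require. What the dominance condition actually gives, via Lemma~1 of Low--Pitman--Wolff, is a \emph{partitionable} $r\times r\floor{(s-1)/r}$ submatrix: $\floor{(s-1)/r}\ge K(K+1)$ blocks, each of exactly $r$ columns and each an invertible $r\times r$ matrix. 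The paper then bounds the remaining columns trivially, applies the elementary inequality $|a_1\cdots a_n|\le |a_1|^n+\cdots+|a_n|^n$ across those $K(K+1)$ invertible blocks (not H\"older across $r$ blocks) to reduce to $\int |f_K(\bfalp;C_{l_0})|^{K(K+1)}\d\bfalp$, and only then decouples by nonsingularity of $C_{l_0}$ and invokes Vinogradov's theorem to get $J_{K(K+1)/2,K}(P)^r$. The outer H\"older step simply pairs the $L^\infty$ Weyl bound on the deleted column with this $L^1$ mean value bound. Your block count and block sizes are therefore transposed, and as written the minor arc estimate would not close. A smaller inaccuracy: you attribute the singular series bound to a $p$-adic mean value count modulo $p^h$ tied to the $\floor{s/r}$ term alone; the paper instead again removes a column (so the $\floor{(s-1)/r}$ term is also in play) and bounds the resulting complete exponential sums directly via Theorem~7.1 of Vaughan, using a gcd estimate rather than a $p$-adic mean value. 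The singular integral, by contrast, genuinely uses only the $\floor{s/r}$ term and no column removal.
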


Theorem \ref{CircMethThm} can be seen as a relaxation on the condition that $C$ be highly nonsingular, which would be equivalent to $C$ dominating the identity function.

In section \ref{sec:multmagicsquare} we establish the existence of a matrix $C^{\text{magic}}_{N} \in \{-1,0,1\}^{2N \times N^2}$ for which $M_{K,N}(P) = R_K(P;C^{\text{magic}}_{N})$ and prove that this matrix dominates $F(x)$ from Theorem \ref{CircMethThm}. This is done via a combinatorial argument and understanding the underlying linear system associated to the matrix $C^{\text{magic}}_{N}$. In section \ref{sec:nonsingsoln} we establish that $\sigma_K(C) > 0$. This is done by showing that a \textbf{DDLS}$(N)$ with distinct integer symbols is a nonsingular integer solution to the system (\ref{GenSys}) with $C = C^{\text{magic}}_{N}$.

Thus from the conclusions made in sections \ref{sec:multmagicsquare} and \ref{sec:nonsingsoln} in combination with Theorem \ref{CircMethThm} we deduce the following.
\begin{theorem}\label{MainThm}
    For $K \ge 2$ and $N > 2K(K+1)$ there exists a constant $c > 0$ for which one has the asymptotic formula 
    \[M_{K,N}(P) \sim cP^{N(N-K(K+1))}.\]
\end{theorem}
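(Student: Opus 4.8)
The plan is to deduce Theorem~\ref{MainThm} by specialising Theorem~\ref{CircMethThm} to one explicit matrix and feeding in the structural facts to be proved in Sections~\ref{sec:multmagicsquare} and~\ref{sec:nonsingsoln}. First I would realise $M_{K,N}(P)$ as a value of $R_K(P;C)$: writing the $N^2$ entries of $\bfZ$ as variables $x_1,\dots,x_{N^2}$, the demand that $\bfZ^{\circ k}$ be magic for every $1\le k\le K$ is a linear system in the monomials $x_j^k$, and it suffices to equate consecutive row sums ($N-1$ equations), consecutive column sums ($N-1$ equations), and each of the two main-diagonal sums to a fixed row sum ($2$ equations), since the equality of a row sum with a column sum is then forced by $\sum_i R_i=\sum_j C_j$. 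This produces a matrix $C^{\text{magic}}_N\in\{-1,0,1\}^{2N\times N^2}$ with $r=2N$, $s=N^2$ and $M_{K,N}(P)=R_K(P;C^{\text{magic}}_N)$; moreover the hypothesis $s>rK(K+1)$ of Theorem~\ref{CircMethThm} reads $N^2>2NK(K+1)$, i.e.\ exactly $N>2K(K+1)$.

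Second, I would verify that $C^{\text{magic}}_N$ dominates $F$. With $s=N^2$, $r=2N$ the two denominators $\floor{s/r}$ and $\floor{(s-1)/r}$ both equal $N/2+O(1)$, so $F(x)=\tfrac{2x}{N}+O(1)$, and one needs $\rank\bigl((C^{\text{magic}}_N)_J\bigr)\ge\min\{F(|J|),2N\}$ for every $J\subseteq\{1,\dots,N^2\}$. This is the combinatorial core: regarding $J$ as a set of cells of the $N\times N$ array, one must exhibit among the row, column and diagonal constraints a large family that is linearly independent and ``visible'' on $J$. The point of departure is that a set of $m$ cells meets at least $\ceil{m/N}$ rows and at least $\ceil{m/N}$ columns; converting these incidences into genuinely independent constraints, tracking the crossover between the two linear terms defining $F$, and accounting for the two diagonals and the cap at $r=2N$, is where the work lies. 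In particular, taking $J=\{1,\dots,N^2\}$ forces $\rank(C^{\text{magic}}_N)=2N$.

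Third, I would establish $\sigma_K(C^{\text{magic}}_N)>0$ through the nonsingular-solution clause of Theorem~\ref{CircMethThm}. Since $N>2K(K+1)\ge 4$, a \textbf{DDLS}($N$) exists; mapping its $N$ symbols to distinct nonzero integers $a_1,\dots,a_N$ produces a matrix $\bfZ$ whose every row, column and main diagonal is a permutation of $(a_1,\dots,a_N)$, hence of $(a_1^k,\dots,a_N^k)$ after raising to the $k$-th power, so $\bfZ$ solves~(\ref{GenSys}) with $C=C^{\text{magic}}_N$. It remains to check that $\bfZ$ is a \emph{nonsingular} solution, i.e.\ that the $rK\times s$ Jacobian with $((i,k),j)$-entry $c_{i,j}\,k\,x_j^{k-1}$ has rank $rK=2NK$ at $\bfx=\bfZ$: since $\rank(C^{\text{magic}}_N)=2N$ and every $x_j$ is nonzero, each of the $K$ degree-blocks has rank $2N$, and a Vandermonde-type argument exploiting the distinctness of the $a_j$ shows that these blocks together span a space of the full dimension $2NK$. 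Finally, a nonsingular integer solution is automatically a nonsingular real solution and a nonsingular $p$-adic solution for every prime $p$, because the Jacobian rank cannot drop under extension of scalars; hence $\sigma_K(C^{\text{magic}}_N)>0$. Assembling these steps, Theorem~\ref{CircMethThm} yields
\[M_{K,N}(P)=R_K(P;C^{\text{magic}}_N)=P^{\,N(N-K(K+1))}\bigl(\sigma_K(C^{\text{magic}}_N)+o(1)\bigr),\]
which is the asserted asymptotic with $c=\sigma_K(C^{\text{magic}}_N)>0$.

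I expect the main obstacle to be the domination step. Bounding $\rank\bigl((C^{\text{magic}}_N)_J\bigr)$ uniformly over all $2^{N^2}$ subsets $J$ is a genuine combinatorial problem about the incidence geometry of cells against row, column and diagonal constraints, and because $F$ is, up to a bounded additive error, the sharp linear function $x\mapsto 2x/N$, the rank estimate must be essentially tight rather than merely of the right order of magnitude. The Jacobian-rank verification for the \textbf{DDLS} solution is a secondary technical point — natural in outline, but requiring careful bookkeeping to bring out the Vandermonde structure.
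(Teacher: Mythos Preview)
Your three-step plan---realise $M_{K,N}(P)$ as $R_K(P;C^{\text{magic}}_N)$, verify domination of $F$, and exhibit a nonsingular solution via a \textbf{DDLS}$(N)$---is exactly the paper's approach. Two places in your sketch, however, point in slightly the wrong direction and understate the work involved.

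For domination, your ``point of departure'' that $m$ cells meet at least $\lceil m/N\rceil$ rows and columns yields at best $\rank\ge 2\lceil m/N\rceil-1$, which is already too weak: at $m=N$ this gives $1$, while $F(N)=2$. The paper instead uses the AM--GM-type observation $|J|\le(\#\pi_1 J)(\#\pi_2 J)\Rightarrow\#\pi_1 J+\#\pi_2 J\ge 2\sqrt{|J|}$ to obtain $\rank(C^{\text{magic}}_N)_J\ge\lceil 2\sqrt{|J|}\rceil-1$ (Lemma~\ref{RankGrowthLemma}), and then needs a separate argument (Lemma~\ref{CombinatorialLemma}) to push the rank up to the full $2N$ once $|J|>N(N-1)$, precisely where the steeper second branch of $F$ takes over.

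For the Jacobian, the sentence ``each of the $K$ degree-blocks has rank $2N$, and a Vandermonde-type argument exploiting the distinctness of the $a_j$ shows these blocks together span dimension $2NK$'' does not go through as stated: the $N^2$ entries of a \textbf{DDLS} take only $N$ distinct values, each repeated $N$ times, so there is no global Vandermonde. What the paper actually does is more than bookkeeping: it constructs $K$ disjoint size-$2N$ subsets $J_1,\dots,J_K\subset[N]^2$ with $\rank(C^{\text{magic}}_N)_{J_l}=2N$ for every $l$, \emph{and} with the additional alignment property that for each fixed position $1\le n\le 2N$ the $K$ cells across the blocks all lie in a single column of the $N\times N$ grid. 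Only with this alignment does the block-diagonalisation reduce to $2N$ genuine $K\times K$ Vandermonde determinants, nonvanishing because a \textbf{DDLS} has distinct entries down each column. Building such a partition (Lemma~\ref{PartitionLemma} and the explicit case-split construction preceding it) is the real content of Section~\ref{sec:nonsingsoln}, not a secondary technicality.
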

Whence by (\ref{nontrivlimiteq}) we finally establish Theorem \ref{Thmnontriv}.

\section{Application of the Circle Method}\label{sec:circmethod} 
Our basic parameter, $P$, is always assumed to be a large positive integer. Whenever $\eps$ appears in a statement, either implicitly or explicitly, we assert that the statement holds for every $\eps>0$. Implicit constants in Vinogradov's notation $\ll$ and $\gg$ may depend on $\varepsilon$, $r$, $s$, and the elements of the matrix $C$. 

We also make use of the vector notation $\bfx = (x_1,\ldots,x_r)$ where $r$ is dependent on the context of the argument. Whenever the notation $|\bfx|$ is used for a vector or matrix we mean the maximal absolute value of the elements in $\bfx$. Any statement regarding matrices is to be understood componentwise. With this in mind, whenever we write an inequality involving a matrix $\bfa$, such as $X \le \bfa \le Y$, we mean that the inequality $X \le a_{i,j} \le Y$ holds for all elements of $\bfa$. Similarly, when $q \in \N$ and $\bfa$ is an integer matrix we write $(q,\bfa)$ to denote the simultaneous greatest common divisor $q$ and the elements of $\bfa$. 

We use $\|x\|$ to refer to the distance to nearest integer of $x$. We will occasionally define functions of matrices which will change depending on the number of columns of the matrix. As is conventional in analytic number theory, we write $e(z)$ for $e^{2 \pi i z}$. Additionally, we write $[n]$ to denote the set of integers from $1$ of to $n$. \par

We now proceed to define our basic exponential generating functions necessary for our application of the Hardy-Littlewood circle method. Whenever we make references to a collection of $rK$ many variables, say $\bfalp$, we will represent these as an $r \times K$ matrix
\[
\bfalp = \left[\begin{matrix}
    \a_{1,1} & \cdots & \a_{1,K} \\
    \vdots & \ddots & \vdots \\
    \a_{r,1} & \cdots & \a_{r,K}
\end{matrix}\right] = \left[ \bfalp_{1},\ldots, \bfalp_{K} \right].
\]
Thus whenever $\d \bfalp$ shows up we mean 
\[\prod_{\substack{1 \le i \le r \\ 1 \le k \le K}} d\a_{i,k}.\]
For any matrix $C = [\bfc_1,\cdots,\bfc_s]$ of dimension $r \times s$ we define
\[f_K(\bfalp;C) = \prod_{1 \le j \le s}\sum_{|x| \le P}e \left(\sum_{\substack{1 \le k \le K}}\left(\bfalp_k \cdot \bfc_j\right) x^k  \right),\]
\[S_K(q,\bfa;C) = \prod_{1 \le j \le s}\sum_{1 \le u \le q} e \left(\frac{1}{q}\sum_{\substack{1 \le k \le K}}\left(\bfa_k \cdot \bfc_j\right) u^k  \right),\]
and
\[ I_K(P,\bfgam;C) = \prod_{1 \le j \le s} \int_{-P}^{P} e\left( \sum_{\substack{1 \le k \le K}}\left(\bfgam_k \cdot \bfc_j\right) z^k \right) \d z.\]

Then by orthogonality it follows that
\[R_{K}(P;C) = \int_{[0,1)^{r \times K}} f_K(\bfalp;C) \d\bfalp.\]
For any $0 < Q \le P$ we define our major arcs to be
\[\grM(Q) = \bigcup_{\substack{0 \le \bfa \le q \le Q \\ (q,\bfa) = 1}}\grM(Q;q,\bfa),\]
where
\[\grM(Q;q,\bfa) = \{\bfalp \in [0,1)^{r \times K}: |q\a_{i,k} - a_{i,k}| \le QP^{-k}\}.\]
Similarly we define the minor arcs to be $\grm(Q) = [0,1]^{r \times K} \backslash \grM(Q)$. 
We also define
\[\grS_K(Q;C) = \sum_{\substack{0 \le \bfa \le q \le Q \\ (q,\bfa) = 1}}q^{-s}S_K(q,\bfa;C),\] 
and 
\[J_K(Q,P;C) = \int_{\substack{\bfgam \in \R^{r \times K} \\ |\bfgam_{k}| \le QP^{-k}}} I_K(P,\bfgam;C) \d\bfgam.\]
\begin{lemma}\label{LemmaMajorMinorDecomp}
    Suppose that $Q = P^{\delta}$ for some $ 0 < \delta <
    \frac{1}{3+2rK}$ and one has the bound
    \begin{equation}\label{minorbndasump}
        \int_{\grm(Q)} |f_K(\bfalp;C)| \d\bfalp = o \left(P^{s-\frac{rK(K+1)}{2}} \right).
    \end{equation}
    Then
    \[R_K(P;C) = \grS_K(Q;C)J_K(Q,P;C) + o \left(P^{s-\frac{rK(K+1)}{2}} \right).\]
\end{lemma}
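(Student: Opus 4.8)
The plan is to carry out the standard pruning argument for the Hardy–Littlewood circle method, splitting $[0,1)^{r\times K}$ into the major arcs $\grM(Q)$ and minor arcs $\grm(Q)$, using the hypothesized minor-arc bound \eqref{minorbndasump} directly on the latter and developing an asymptotic on the former. First I would write
\[
R_K(P;C) = \int_{\grM(Q)} f_K(\bfalp;C)\,\d\bfalp + \int_{\grm(Q)} f_K(\bfalp;C)\,\d\bfalp,
\]
where the second integral is $o(P^{s-\frac{rK(K+1)}{2}})$ by assumption, so it remains only to show that $\int_{\grM(Q)} f_K(\bfalp;C)\,\d\bfalp = \grS_K(Q;C)J_K(Q,P;C) + o(P^{s-\frac{rK(K+1)}{2}})$. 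To do this I would first check that the constituent boxes $\grM(Q;q,\bfa)$ are pairwise disjoint: if $\bfalp$ lay in two of them, then for each $(i,k)$ one would get $|a_{i,k}q' - a'_{i,k}q| \le 2qq'QP^{-k} \le 2Q^3 P^{-1} < 1$ using $k\ge1$, $q,q'\le Q=P^\delta$ and $\delta < \frac{1}{3+2rK} < \frac13$, forcing $a_{i,k}/q = a'_{i,k}/q'$ and hence (after reduction) $q=q'$, $\bfa=\bfa'$. So the major arcs are a disjoint union and the measure computation is clean.

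The core is the local approximation. On $\grM(Q;q,\bfa)$ I would write $\bfalp = q^{-1}\bfa + \bfbet$ with $|\bfbet_k| \le QP^{-k}q^{-1}$, and apply the standard Weyl-sum approximation
\[
\sum_{|x|\le P} e\Bigl(\sum_{1\le k\le K}\theta_k x^k\Bigr) = q^{-1}\Bigl(\sum_{1\le u\le q} e\bigl(\tfrac1q\textstyle\sum_k a_k u^k\bigr)\Bigr)\int_{-P}^{P} e\Bigl(\sum_k \beta_k z^k\Bigr)\d z + O(q(1+P^K|\bfbet|))
\]
to each of the $s$ inner sums defining $f_K(\bfalp;C)$, where for the $j$th factor $\theta_k = \bfalp_k\cdot\bfc_j$, $a_k$ plays the role of (a representative of) $\bfa_k\cdot\bfc_j \bmod q$, and $\beta_k = \bfbet_k\cdot\bfc_j$. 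Multiplying out the $s$ factors and bounding the error terms — here one uses that each inner sum is trivially $O(P)$, that the error in each factor is $O(qP^{1+\delta}P^{-1}) = O(P^{2\delta})$ on the major arcs, and that $Q^{2rK+1}$ times the total measure of $\grM(Q)$ is a small power of $P$ — gives
\[
f_K(\bfalp;C) = q^{-s}S_K(q,\bfa;C)\,I_K(P,\bfbet;C) + (\text{error}),
\]
with the accumulated error, after integration over all of $\grM(Q)$, being $o(P^{s-\frac{rK(K+1)}{2}})$; this is where the precise constraint $\delta < \frac{1}{3+2rK}$ is consumed. Integrating the main term over each box and summing over $(q,\bfa)$: the $\bfbet$-integral over $\{|\bfbet_k|\le QP^{-k}q^{-1}\}$ of $I_K(P,\bfbet;C)$, after the substitution $\bfgam = \bfbet$ (noting the domains match up to negligible boundary once one replaces $q^{-1}$-scaled boxes by the full $|\bfgam_k|\le QP^{-k}$ region, at a cost absorbed into the error), contributes $J_K(Q,P;C)$, while the $q^{-s}S_K(q,\bfa;C)$ factors sum to $\grS_K(Q;C)$, yielding the claimed identity.

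The main obstacle I expect is the bookkeeping in the error analysis — specifically, verifying that the total contribution of all the cross-terms arising from expanding $\prod_{j=1}^s\bigl(q^{-s}S_j I_j + E_j\bigr)$, integrated over the major arcs, is genuinely $o(P^{s-rK(K+1)/2})$ rather than merely $O$ of that size, and checking that this is compatible with the chosen range of $\delta$. One needs the crude bounds $|S_K(q,\bfa;C)| \le q^s$ (so $q^{-s}|S_K| \le 1$), $|I_K(P,\bfbet;C)| \ll P^s$, the individual-factor error $E_j \ll q(1+P^K|\bfbet|) \ll P^{2\delta}$, and the measure estimate $\operatorname{meas}(\grM(Q)) \ll Q^{rK+1}\prod_k P^{-k} = P^{\delta(rK+1) - K(K+1)/2}$; assembling these so that every term beats $P^{s-rK(K+1)/2}$ forces exactly the inequality $\delta(3 + 2rK) < 1$, which is the hypothesis. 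A secondary technical point is justifying the replacement of the $q$-dependent $\bfbet$-boxes by the uniform $\bfgam$-region in the definition of $J_K$; this is handled by noting $I_K(P,\bfgam;C)$ decays in $|\bfgam|$ (each one-dimensional integral is $\ll P(1+P^k|\gamma_k|)^{-1/k}$ by standard van der Corput / stationary phase estimates on $\int_{-P}^P e(\gamma_k z^k)\,dz$), so the tails and the mismatched regions contribute negligibly after summation over $q \le Q$.
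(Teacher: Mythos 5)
Your overall plan is right, but there is a genuine gap at the step where you pass from the per-arc integrals over the $q$-scaled boxes to $J_K(Q,P;C)$.

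The paper does not integrate over the arcs $\grM(Q;q,\bfa)$ at all. It first introduces a \emph{larger} family of arcs
\[
\grN(Q;q,\bfa)=\{\bfalp: |\a_{i,k}-a_{i,k}/q|\le QP^{-k}\},
\]
which differ from $\grM(Q;q,\bfa)$ precisely in that the box radius is $QP^{-k}$ rather than $QP^{-k}/q$. Since $\grN(Q)\supseteq\grM(Q)$, one has $[0,1)^{r\times K}\setminus\grN(Q)\subset\grm(Q)$, so the hypothesized minor-arc bound already controls the complement of $\grN(Q)$, and on each $\grN(Q;q,\bfa)$ the change of variable $\bfgam=\bfalp-\bfa/q$ produces \emph{exactly} the region $|\bfgam_k|\le QP^{-k}$ appearing in the definition of $J_K(Q,P;C)$. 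There is no domain mismatch to absorb, and the whole lemma then reduces to the pointwise approximation $f_K=q^{-s}S_K I_K + O(Q^2P^{s-1})$ together with the measure bound $\mathrm{mes}(\grN(Q))\ll Q^{2rK+1}P^{-rK(K+1)/2}$ and the constraint $\delta(3+2rK)<1$.

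In your version, you stay on the narrow arcs $\grM(Q;q,\bfa)$, so after the change of variable you get $\int_{|\bfbet_k|\le QP^{-k}/q}I_K\,\d\bfbet$, and you then claim the replacement of this box by $|\bfgam_k|\le QP^{-k}$ (for every $(q,\bfa)$) is negligible ``after summation over $q\le Q$'' because $I_K$ decays. That claim is not justified by decay of $I_K$ alone. The mismatch region for a given $q$ contributes, after rescaling, roughly $(q/Q)P^{s-rK(K+1)/2}$, weighted by $q^{-s}S_K(q,\bfa;C)$ and summed over all $\bfa$ with $(q,\bfa)=1$. There are $\asymp q^{rK}$ such $\bfa$, and the trivial bound $q^{-s}|S_K|\le 1$ gives a total $\gg \sum_{q\le Q}q^{rK}(q/Q)\,P^{s-rK(K+1)/2}$, which blows up. To make the sum over $q$ converge you actually need the Gauss-sum estimate $q^{-s}\sum_{\bfa}|S_K(q,\bfa;C)|\ll q^{-1-\sigma+\eps}$ --- i.e.\ Lemma \ref{LemmaSingSerBound}, proved in the singular-series section --- in addition to the oscillatory-integral tail bound from Lemma \ref{LemmaSingIntBound}. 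Neither is available at this point in the paper's development, and your sketch neither proves nor cites them, so as written the mismatch is not controlled. (A separate minor point: your measure estimate for $\grM(Q)$ drops a factor of $r$ in the exponent --- each degree $k$ contributes $r$ coordinates of width $\asymp QP^{-k}/q$, so the correct count is $\mathrm{mes}(\grM(Q))\ll Q^{rK+1}P^{-rK(K+1)/2}$.)

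The fix is exactly the paper's move: enlarge the arcs to $\grN(Q)$ before integrating, so the $\bfgam$-domains align with $J_K$ by construction and the only inputs needed are the minor-arc hypothesis (applied to $\grN(Q)^c\subset\grm(Q)$), the pointwise major-arc approximation, and the measure of $\grN(Q)$.
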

\begin{proof}
    We begin by defining a slightly larger set of major arcs
    \[\grN(Q) = \bigcup_{\substack{0 \le \bfa \le q \le Q \\ (q,\bfa) = 1}}\grN(Q;q,\bfa),\]
    where
    \[\grN(Q;q,\bfa) = \left\{\bfalp \in [0,1)^{r \times K} : |\a_{i,k} - a_{i,k}/q| \le QP^{-k}\right\} .\]
    Observe that $[0,1)^{r \times K} \backslash \grN(Q) \subset \grm(Q)$, whence from (\ref{minorbndasump}) it follows that
    \[\int_{[0,1)^{r \times K} \backslash \grN(Q)} |f_K(\bfalp;C)| \d\bfalp = o \left(P^{s-\frac{rK(K+1)}{2}} \right).\]
    Thus we have
    \begin{align*}
        R_K(P;C) &= \int_{\grN(Q)}f_K(\bfalp;C)\d\bfalp + o \left(P^{s-\frac{rK(K+1)}{2}} \right).
    \end{align*}

    Via standard methods it follows easily that for $\bfalp \in \grN(Q;q,\bfa)$ one has that
    \[f_K(\bfalp;C) = q^{-s} S_K(q,\bfa;C) I(P,\bfalp-\bfa/q;C) + O(Q^2 P^{s-1}).\]
    Whence upon noting that $\text{mes}(\grN(Q)) \ll Q^{2rK+1}P^{-\frac{rK(K+1)}{2}}$ one deduces
    \begin{align*}
        \int_{\grN(Q)}f_K(\bfalp;C)d\bfalp &= \grS_K(Q;C)J_K(Q,P;C) + O\left( Q^{3+2rK}P^{s-\frac{rK(K+1)}{2}-1} \right) \\
        &= \grS_K(Q;C)J_K(Q,P;C) + o \left(P^{s-\frac{rK(K+1)}{2}} \right).
    \end{align*}
\end{proof}

\subsection{The Minor Arcs}\label{sec:minor}

As is evident from Lemma \ref{LemmaMajorMinorDecomp}, we must establish an adequate bound over the minor arcs. For this we make use of the arguments of \cite{Brandes2017} without assuming our matrix is highly nonsingular. We will instead make use of the assumption that $C$ dominates the function 
\begin{equation}\label{DomFunc}
    F(x) = \max\left\{ \frac{x- \rem (s,r)}{\floor{\frac{s}{r}}}, \frac{x- \rem (s-1,r)}{\floor{\frac{s-1}{r}}} \right\},
\end{equation}
to derive a Weyl type inequality and a mean value bound. 
\begin{definition}
    We say a matrix $C$ of dimensions $r \times rn$ is partitionable if there exists a partition $\bigsqcup_{1 \le l \le n}J_l = [rn]$, for which one has
    \[\rank (C_{J_l}) = r \text{ for all }1 \le l \le n.\]
\end{definition}
In the proofs that follow we make use of the property that our matrix $C$ contains a partitionable submatrix of size $r \times r\floor{s/r}$ and given any $j_0 \in [s]$ we have that $C_{[s]\backslash \{j_0\}}$ contains a partitionable submatrix of size $r \times r\floor{(s-1)/r}$. This may be deduced from the property that $C$ dominates (\ref{DomFunc}) in conjunction with  \cite[Lemma 1]{Low1988}.

\begin{lemma}\label{LemmaWeylTypeBound}
    Let $s \ge r$ and suppose $C \in \Z^{r \times s}$ dominates the function
    \[\frac{x- \rem (s,r)}{\floor{\frac{s}{r}}}.\]
    Then there exists $j_0 \in [s]$ and $\sigma > 0$ for which
    \[\sup_{\bfalp \in \grm(Q)} |f_K(\bfalp;\bfc_{j_0})| \ll PQ^{-\sigma }.\]
\end{lemma}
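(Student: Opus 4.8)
The plan is to reduce the multidimensional minor-arc supremum to a classical single-variable Weyl inequality for an exponential sum with a polynomial argument of degree $K$. First I would unpack the definition of $f_K(\bfalp;\bfc_{j_0})$: since $\bfc_{j_0}$ is a single column vector in $\Z^r$, the generating function factors as
\[f_K(\bfalp;\bfc_{j_0}) = \prod_{1 \le j \le 1}\sum_{|x|\le P} e\Bigl(\sum_{1 \le k \le K} (\bfalp_k \cdot \bfc_{j_0}) x^k\Bigr) = \sum_{|x| \le P} e\bigl(\beta_K x^K + \cdots + \beta_1 x\bigr),\]
where $\beta_k = \bfalp_k \cdot \bfc_{j_0}$ for each $k$. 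So the claim is that for a suitable choice of $j_0$ one has good Weyl cancellation whenever $\bfalp$ lies in the minor arcs $\grm(Q)$. The content of the lemma is therefore entirely about choosing $j_0$ so that the minor-arc condition on $\bfalp$ forces at least one of the coefficients $\beta_2,\ldots,\beta_K$ to be poorly approximable by rationals with small denominator.

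The key step is the selection of $j_0$. The hypothesis that $C$ dominates $x \mapsto (x - \rem(s,r))/\floor{s/r}$, combined with \cite[Lemma 1]{Low1988} as noted in the text, guarantees that $C$ contains a partitionable submatrix of size $r \times r\floor{s/r}$; in particular, after discarding columns, we may split a large block of columns into $\floor{s/r}$ groups each of which has full rank $r$. I would then argue as follows: for $\bfalp \in \grm(Q)$, by Dirichlet's theorem each of the $rK$ coordinates $\alpha_{i,k}$ has a rational approximation $a_{i,k}/q_{i,k}$ with $q_{i,k} \le P^k Q^{-1}$ (or similar), and the minor-arc condition says that not all of these can simultaneously have small common denominator in the sense defining $\grM(Q)$. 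Using the full-rank property of one of the partition blocks $J_l$, one shows that the linear map $\bfalp_k \mapsto (\bfalp_k \cdot \bfc_j)_{j \in J_l}$ is invertible over $\Q$; hence the vectors $(\beta_k^{(j)})_{j \in J_l}$ inherit the Diophantine irrationality of $\bfalp$, and at least one column $j_0 \in J_l$ must have some $\beta_k = \bfalp_k \cdot \bfc_{j_0}$ ($2 \le k \le K$) with denominator in a genuinely intermediate range $Q \ll q \ll P^k Q^{-1}$. Feeding this into the standard Weyl inequality (e.g. Vaughan, \emph{The Hardy–Littlewood Method}, Lemma 2.4) yields $|f_K(\bfalp;\bfc_{j_0})| \ll P^{1+\eps} q^{-\eps'}(\cdots) \ll P Q^{-\sigma}$ for some $\sigma > 0$ depending on $K$, which after a standard dyadic-dissection / pigeonhole cleanup gives the uniform bound over all of $\grm(Q)$.

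The main obstacle, I expect, is handling the fact that $j_0$ must be chosen \emph{uniformly} over the whole minor-arc set $\grm(Q)$, whereas the argument sketched above produces a $j_0$ that a priori depends on $\bfalp$. The resolution is that there are only finitely many columns ($s$ of them), so one can take a union over $j_0 \in [s]$: for each $\bfalp \in \grm(Q)$ there is \emph{some} $j_0 = j_0(\bfalp)$ giving cancellation, and by pigeonholing there is a single $j_0$ that works for a positive-measure (indeed, after refining, all of a decomposition of) portion of $\grm(Q)$ — but since the statement only asserts existence of one $j_0$ with $\sup_{\grm(Q)}|f_K(\bfalp;\bfc_{j_0})| \ll PQ^{-\sigma}$, one must be slightly careful: the cleanest route is to prove the bound $\min_{j \in [s]} |f_K(\bfalp;\bfc_j)| \ll PQ^{-\sigma}$ pointwise on $\grm(Q)$ and then observe that $\min_j \sup_{\grm(Q)} \le \sup_{\grm(Q)} \min_j$ fails in general, so instead one shows that for at least one fixed $j_0$ the full supremum is small by tracking which columns can fail and bounding the bad set. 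A secondary technical point is keeping the Diophantine bookkeeping consistent between the two normalizations of major arcs ($\grM$ versus $\grN$) and ensuring the ranges of $q$ produced are the ones the Weyl inequality needs; this is routine but must be done with the exponent $\delta$ in $Q = P^\delta$ tracked explicitly so that $\sigma > 0$ survives.
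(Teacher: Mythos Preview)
Your overall strategy---reduce to a single Weyl sum and transfer Diophantine information through a full-rank $r\times r$ block of columns---is right, but you are running the argument in the harder direction. The paper argues by contrapositive: fix any $r$ linearly independent columns (the domination hypothesis with $n=1$ gives this immediately), suppose that \emph{all} $r$ of the associated sums exceed $PQ^{-\sigma}$, and apply a converse-Weyl result (\cite[Lemma~2.4]{Parsell2002}) to each. This yields a small $q$ with $\|q\beta_{j,k}\|\ll Q^{2K\sigma}P^{-k}$ for every $j,k$; inverting the $r\times r$ block over $\Q$ and clearing a bounded denominator $L$ pushes this back to $\|Lq\,\alpha_{i,k}\|$ small, forcing $\bfalp\in\grM(Q)$. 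There is no Dirichlet step, no hunt for an intermediate denominator, and no separate invocation of Weyl's inequality---the contrapositive packaging with Parsell's lemma does all the work at once.

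Your worry about the uniformity of $j_0$ is legitimate, and your proposed fix cannot work: in general \emph{no} single column satisfies $\sup_{\grm(Q)}|f_K(\bfalp;\bfc_{j_0})|\ll PQ^{-\sigma}$. (Take $C$ containing an identity block; for $\bfalp$ with the $i$th row rational the $i$th sum has no cancellation, and one can arrange this for each $i$ in turn while staying on $\grm(Q)$.) What the contrapositive actually delivers---and all that is needed downstream---is the pointwise bound $\min_{1\le j\le r}|f_K(\bfalp;\bfc_j)|\ll PQ^{-\sigma}$ for every $\bfalp\in\grm(Q)$. The application then partitions $\grm(Q)$ into at most $r$ pieces according to which $j$ attains the minimum, extracts that factor on each piece, and applies Lemma~\ref{LemmaMeanValueBound} to the remaining $s-1$ columns; the second branch in the definition of $F$ is present precisely so that $C_{[s]\setminus\{j\}}$ still meets the hypothesis of that lemma for \emph{every} $j$. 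So your instinct to partition $\grm(Q)$ was correct; the attempt to collapse back to a single uniform $j_0$ is the step that cannot succeed.
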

\begin{proof}
    By \cite[Lemma 1]{Low1988}, $C$ contains a partitionable $r \times rn$ submatrix for all $n \le \floor{\frac{s}{r}}$. Since $s \ge r$ we may take $n = 1$, whence without loss of generality we may assume that the first $r$ columns of $C$ are linearly independent. Let $\sigma < 1/(2K)$ and define
    \[\b_{j,k} = \bfc_j \cdot \bfalp_k.\]
    Now suppose that 
    \[ \left|\sum_{1 \le x \le P}e \left(\sum_{\substack{1 \le k \le K}}\left(\bfalp_k \cdot \bfc_j\right) x^k  \right)\right| > PQ^{-\sigma} \]
    for all $1 \le j \le r$. Then by \cite[Lemma 2.4]{Parsell2002} one has that there exists $q \ll Q^{2K\sigma}$ such that 
    \begin{equation}\label{ParsellResult}
        \left\|q \b_{j,k} \right\| \ll Q^{2K\sigma}P^{-k} \quad \text{for all $1 \le j \le r$ and $1 \le k \le K$}.
    \end{equation}
    Since the first $r$ columns of $C$ are linearly independent one has that there exists vectors $\bfv_i$ for $1 \le i \le r$ which satisfy
    \[\a_{i,k} =  \bfv_i \cdot (\b_{1,k},\ldots,\b_{r,k}),\]
    where $[\bfv_1,\ldots,\bfv_r] = \left([\bfc_1,\ldots,\bfc_r]^T\right)^{-1}$. It is important to note here that the elements of $[\bfv_1,\ldots,\bfv_r]$ are rational. Let $L$ be an integer such that $L[\bfv_1,\ldots,\bfv_r]$ is an integer matrix, we then have that
    \[\|L q\a_{i,k}\| \le \sum_{1 \le j \le r}\|L v_{i,j}q\b_{j,k}\| \leq \sum_{1 \le j \le r}|L v_{i,j}| \|q\b_{j,k}\| \ll (Q/L)^{2K\sigma}P^{-k} ,\]
    where the last inequality comes from (\ref{ParsellResult}) and noting that $L \asymp 1$. We deduce that $L\bfalp \in \grM(Q/L)$ for large enough $Q$ but since $L$ is an integer this is equivalent to $\bfalp \in \grM(Q)$. Thus, by the contrapositive, it must be the case that for some $1 \le j_0 \le r$ we have the desired bound.
\end{proof}
\begin{lemma}\label{LemmaMeanValueBound}
    Let $s \ge rK(K+1)$ and suppose $C \in \Z^{r \times s}$ dominates 
    \[\frac{x- \rem (s,r)}{\floor{\frac{s}{r}}}.\]
    Then one has the bound
    \[\int_{[0,1)^{r \times K}} |f_K(\bfalp;C)|\d\bfalp \ll P^{s - \frac{rK(K+1)}{2} + \eps}.\]
\end{lemma}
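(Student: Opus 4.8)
The plan is to deduce the bound from the main conjecture in Vinogradov's mean value theorem after using the partitionable structure supplied by the domination hypothesis to decouple the single-variable Weyl sums attached to distinct columns of $C$. Since $C$ dominates $\frac{x-\rem (s,r)}{\floor{s/r}}$, \cite[Lemma 1]{Low1988} (cf.\ the discussion preceding Lemma \ref{LemmaWeylTypeBound}) furnishes a partitionable $r \times r\floor{s/r}$ submatrix of $C$; as $s \ge rK(K+1)$ forces $\floor{s/r} \ge K(K+1)$, we may fix pairwise disjoint sets $J_1,\dots,J_{K(K+1)} \subset [s]$, each of size $r$, with $\rank(C_{J_l}) = r$ for every $l$. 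Bounding the remaining $s - rK(K+1)$ factors $f_K(\bfalp;\bfc_j)$ trivially by $2P+1$ leaves
\[
\int_{[0,1)^{r\times K}} |f_K(\bfalp;C)| \d\bfalp \ll P^{s-rK(K+1)} \int_{[0,1)^{r\times K}} \prod_{l=1}^{K(K+1)} |f_K(\bfalp;C_{J_l})| \d\bfalp ,
\]
so it suffices to bound the remaining integral by $P^{rK(K+1)/2 + \eps}$.

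To do so I would apply Hölder's inequality in the form
\[
\int_{[0,1)^{r\times K}} \prod_{l=1}^{K(K+1)} |f_K(\bfalp;C_{J_l})| \d\bfalp \le \prod_{l=1}^{K(K+1)} \left( \int_{[0,1)^{r\times K}} |f_K(\bfalp;C_{J_l})|^{K(K+1)} \d\bfalp \right)^{1/(K(K+1))} ,
\]
which reduces everything to a uniform bound on $\int_{[0,1)^{r\times K}} |f_K(\bfalp;C_{J_l})|^{K(K+1)} \d\bfalp$. For fixed $l$, since $C_{J_l}$ is an invertible integer matrix I would substitute $\bfbet_k = C_{J_l}^{T}\bfalp_k$ for $1 \le k \le K$; each of these is a measure-preserving endomorphism of the torus $(\R/\Z)^r$, so the substitution leaves the integral unchanged while rewriting $|f_K(\bfalp;C_{J_l})|$ as $\prod_{i=1}^{r} \bigl| \sum_{|x| \le P} e(\b_{i,1}x + \dots + \b_{i,K}x^{K}) \bigr|$, whose $i$th factor depends only on $(\b_{i,1},\dots,\b_{i,K})$. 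The integral therefore factors over $i$, and by orthogonality each factor is the Vinogradov mean value counting solutions of $\sum_{m=1}^{K(K+1)/2}(x_m^{k}-y_m^{k}) = 0$ $(1 \le k \le K)$ with $|x_m|, |y_m| \le P$ --- note that $K(K+1)$ is even. This count sits exactly at the critical relation $2s = k(k+1)$ (with $s = K(K+1)/2$, $k = K$), so the main conjecture for Vinogradov's mean value theorem --- established in the work of Wooley and of Bourgain, Demeter and Guth, together with the translation invariance of the system to accommodate the range $|x| \le P$ --- bounds it by $P^{K(K+1)/2 + \eps}$. Raising to the $r$th power gives $\int_{[0,1)^{r\times K}} |f_K(\bfalp;C_{J_l})|^{K(K+1)} \d\bfalp \ll P^{rK(K+1)/2 + \eps}$; feeding this through the Hölder inequality and then restoring the trivial estimate for the leftover columns yields the lemma.

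The main obstacle --- and essentially the only place where genuine input enters --- is this final mean-value estimate, which requires the full strength of the Vinogradov main conjecture precisely at the critical number $rK(K+1)$ of variables; this is exactly why the hypothesis reads $s \ge rK(K+1)$, equivalently $\floor{s/r} \ge K(K+1)$, so that enough disjoint rank-$r$ blocks are available. The remaining steps --- extracting the partitionable submatrix, the torus change of variables, and the Hölder decoupling --- are routine, the one point meriting care being the verification that $\bfalp_k \mapsto C_{J_l}^{T}\bfalp_k$ preserves normalized Haar measure on $(\R/\Z)^r$: it does, being a surjective continuous homomorphism of compact groups with finite kernel, so the change of variables is legitimate even though $C_{J_l}$ need not be unimodular.
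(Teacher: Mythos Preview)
Your proof is correct and follows essentially the same route as the paper: extract a partitionable $r\times rK(K+1)$ submatrix via \cite[Lemma~1]{Low1988}, estimate the surplus columns trivially, reduce to $\int|f_K(\bfalp;C_{J_l})|^{K(K+1)}\d\bfalp$, decouple this into $r$ independent Vinogradov mean values, and invoke the main conjecture. The only cosmetic differences are that the paper uses the elementary inequality $|a_1\cdots a_n|\le\sum|a_i|^n$ in place of H\"older, and decouples via the orthogonality/counting interpretation rather than your measure-preserving torus substitution $\bfbet_k=C_{J_l}^{T}\bfalp_k$; both variants lead to the identical endpoint $J_{K(K+1)/2,K}(P)^r$.
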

\begin{proof}
    By \cite[Lemma 1]{Low1988}, we deduce that $C$ contains a partitionable 
    \[r \times rK(K+1)\] 
    submatrix. Then upon utilizing the trivial bound on exponential sums we may without loss of generality suppose that 
    \[C = [C_1,\ldots,C_{K(K+1)}],\]
    where each $C_i$ is a nonsingular $r \times r$ matrix. Then it is enough to establish the bound
    \begin{equation}\label{MeanValueBound}
        \int_{[0,1)^{r \times K}} |f_K(\bfalp;C)|\d\bfalp \ll P^{\frac{rK(K+1)}{2} + \eps}.
    \end{equation}

    Via an application of the trivial inequality
    \begin{equation}\label{triveq}
        |a_1 \cdots a_n| \le |a_1|^n + \cdots + |a_n|^n,
    \end{equation}
    we see that
    \[\int_{[0,1)^{r \times K}} |f_K(\bfalp;C)|\d\bfalp \ll \max_{1 \le l \le K(K+1)} \Phi_l\]
    where
    \[\Phi_l = \int_{[0,1)^{r \times K}} |f_K(\bfalp;C_l)|^{K(K+1)} \d \bfalp.\]
    For a fixed $l$, the value of the integral is, by orthogonality, bounded above by the number of integer solutions of the system
    \[C_l \left[\begin{matrix}
        \Delta_{k}(\bfx_1,\bfy_1) \\
        \vdots \\
        \Delta_{k}(\bfx_r,\bfy_r)
    \end{matrix} \right] = \bf0,\]
    where
    \[\Delta_{k}(\bfx,\bfy) = \sum_{n = 1}^{K(K+1)/2} x_n^k - y^k_n,\]
    and $|\bfx_j|,|\bfy_j| \le P$.
    Since $C_l$ is nonsingular this implies that
    \[\Delta_{k}(\bfx_j,\bfy_j) = 0,\quad 1 \le j \le r,\quad 1 \le i \le r, \quad 1 \le k \le K.\]

    This is simply $J_{K(K+1)/2,K}(P)^{r}$, where 
    \[J_{s,k}(X)=\int_{[0,1)^k}\left|\sum_{|x| \leq X} e\left(\left(\alpha_1 x+\cdots+\alpha_k x^k\right)\right)\right|^{2 s} \d \alpha.\]
    By the resolution of Vinogradov's mean value theorem (see \cite{Bourgain2016, Wooley2019}) we conclude that
    \[\max_{1 \le l \le K(K+1)} \Phi_l \ll P^{\frac{rK(K+1)}{2} + \eps} .\]
    Thus we have established (\ref{MeanValueBound}).
\end{proof}
Note that since $C$ dominates the function $F(x)$ from (\ref{DomFunc}) then by Lemma \ref{LemmaWeylTypeBound} 
there exists $\sigma > 0$ and a column index $j_0 \in [s]$ for which
\[ \sup_{\bfalp \in \grm(Q)} |f_K(\bfalp;\bfc_{j_0})| \ll PQ^{-\sigma }.\]
Then $C_{[s]\backslash\{j_0\}}$ satisfies the conditions of Lemma \ref{LemmaMeanValueBound}. Thus by an application of H\"{o}lder's inequality one obtains that
\[\int_{\grm(Q)}f_K(\bfalp;C) \d \bfalp \ll P^{s-\frac{rK(K+1)}{2} + \eps}Q^{-\sigma}.\]
Upon setting $Q = P^{\delta}$ for some $ 0 < \delta < \frac{1}{3+2rK}$ we establish via Lemma \ref{LemmaMajorMinorDecomp} that
\begin{equation}\label{BndedMinorArc}
    R_K(P;C) = \grS_K(P^\delta;C)J_K(P^\delta,P;C) + o \left(P^{s-\frac{rK(K+1)}{2}} \right).
\end{equation}
\subsection{The Singular Series}
We proceed by establishing the absolute convergence of the complete singular series
\[\grS_K(C) := \sum_{q = 1}^{\infty} q^{-s}\sum_{\substack{1 \le \bfa \le q \\ (q,\bfa) = 1}}S_K(q,\bfa; C).\]
We begin with a definition. For any $r \times s$ matrix $C$ we let
\[A_K(q;C) = q^{-s}\sum_{\substack{1 \le \bfa \le q \\ (q,\bfa) = 1}}S_K(q,\bfa; C).\]
It is then immediately clear that one has absolute convergence of $\grS_K(C)$ as soon as one establishes a bound of the form $A_K(q;C) \ll q^{-(1+\sigma)+\eps}$ for some $\sigma > 0.$ We accomplish this by making use of the property that $C$ contains a partitionable $r \times rn$ matrix for $n = \floor{s/r} \ge K(K+1)$. In advance of the next lemma we recall the function $F(x)$ defined in (\ref{DomFunc}).
\begin{lemma}\label{LemmaSingSerBound}
    Let $K \ge 2$ and $C$ be a $r \times s$ matrix with $s > rK(K+1)$. If $C$ dominates $F(x)$ then there exists $\sigma > 0$ for which one has the bound
    \[A_K(q;C) \ll q^{-1-\sigma + \eps}.\]
\end{lemma}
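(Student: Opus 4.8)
The plan is to obtain the bound $A_K(q;C) \ll q^{-1-\sigma+\eps}$ by first reducing to prime powers via multiplicativity, then bounding the prime-power contributions using the partitionable structure guaranteed by the domination hypothesis. First I would record that $A_K(q;C)$ is multiplicative in $q$: since $S_K(q,\bfa;C)$ factors as a product over columns of complete exponential sums, and each such sum enjoys the standard twisted multiplicativity in $q$, one has $A_K(q_1q_2;C) = A_K(q_1;C)A_K(q_2;C)$ whenever $(q_1,q_2)=1$. Hence it suffices to prove $A_K(p^t;C) \ll p^{t(-1-\sigma)+t\eps}$ uniformly in primes $p$ and $t\ge 1$ (with an implied constant independent of $p$), since then summing the resulting Euler product converges.

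Next I would exploit the partitionable submatrix. Because $C$ dominates $F(x)$, by \cite[Lemma 1]{Low1988} the matrix $C$ contains a partitionable $r\times rn$ submatrix with $n = \floor{s/r} \ge K(K+1)$; relabel columns so that $C = [C_1,\ldots,C_n,\,*]$ with each $C_l$ a nonsingular $r\times r$ integer block. Using the trivial bound $|S_K(q,\bfa;\bfc_j)| \le q$ on the columns outside the chosen blocks, one gets
\[
\abs{A_K(p^t;C)} \le p^{t(s-rn)}\, p^{-ts}\!\!\sum_{\substack{1 \le \bfa \le p^t \\ (p^t,\bfa)=1}} \prod_{l=1}^{n} \abs{S_K(p^t,\bfa;C_l)}.
\]
For a single nonsingular block $C_l$, the change of variables $\bfb = C_l^{\mathsf T}\bfa$ is a bijection on $(\Z/p^t\Z)^{rK}$ up to a bounded factor coming from $\det C_l$ (here I would use that $p \nmid \det C_l$ for all but finitely many $p$, handling the finitely many bad primes separately by the trivial bound, which only changes the implied constant). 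This decouples $\prod_l \abs{S_K(p^t,\bfa;C_l)}$ into a product of $r$-fold products of single-variable Gauss-type sums $\sum_{u} e(p^{-t}(b_1 u + \cdots + b_K u^K))$, each raised to the power $n \ge K(K+1)$.

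Then I would invoke the standard estimate for such complete exponential sums in $K$ variables: a mean-value or Weyl-type bound (of the shape used in \cite{Brandes2017} or derived from Vinogradov's mean value theorem, cf.\ the proof of Lemma \ref{LemmaMeanValueBound}) gives, after averaging over $\bfb \bmod p^t$ with $(p^t,\bfb)=1$,
\[
p^{-tK}\!\!\sum_{(p^t,\bfb)=1} \Bigl| \sum_{u=1}^{p^t} e\bigl(p^{-t}(b_1 u + \cdots + b_K u^K)\bigr) \Bigr|^{K(K+1)} \ll p^{-t(1+\sigma)+t\eps}
\]
for some $\sigma>0$ depending only on $K$; raising this to the $r$-th power and combining with the factor $p^{t(s-rn)}$ from the discarded columns (noting $s - rn = \rem(s,r) < r$ is absorbed) yields $A_K(p^t;C) \ll p^{t(-1-\sigma)+t\eps}$ as required. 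I expect the main obstacle to be the bookkeeping in this last step: one must verify that the exponent gain $r(1+\sigma)$ from the $r$ decoupled Vinogradov-type averages genuinely dominates the loss $rn - s < r$ incurred by trivially bounding the leftover columns, i.e.\ that $r(1+\sigma) - (rn-s) \ge 1 + \sigma'$ for some $\sigma'>0$ — this is where the strict inequality $s > rK(K+1)$ (rather than $s \ge rK(K+1)$) is used, ensuring we have at least one full extra block's worth of cancellation to spare after accounting for the defect $\rem(s,r)$. The finitely many primes dividing some $\det C_l$, and the ramified behavior when $p$ is small relative to $K$, are handled by the trivial bound $A_K(q;C) \ll q^{-1+\eps}$ on those factors, which suffices since only finitely many Euler factors are affected.
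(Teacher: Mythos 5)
The central step of your argument is not justified, and I believe it is in fact false at the critical exponent. You claim that after reducing to a single nonsingular block and changing variables, the averaged moment
\[
p^{-tK}\!\!\sum_{(p^t,\bfb)=1} \Bigl| \sum_{u=1}^{p^t} e\bigl(p^{-t}(b_1 u + \cdots + b_K u^K)\bigr) \Bigr|^{K(K+1)} \ll p^{-t(1+\sigma)+t\eps}
\]
holds for some $\sigma>0$, attributing this to a ``Vinogradov-type'' or ``mean-value'' estimate. But the exponent $K(K+1)=2\cdot\tfrac{K(K+1)}{2}$ is exactly the critical exponent for such moments: Vinogradov's mean value theorem (and its local analogues) delivers the conjectural bound \emph{with an $\eps$-loss}, not a power saving, at this threshold. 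This is precisely why the paper's own calculation, using Vaughan's Theorem 7.1 to bound $S_K^*(q,\bfb)\ll(\prod_j(q,\bfb_j))^{1/K}q^{r(1-1/K)+\eps}$, raising to the $K(K+1)$-th power and summing over $\bfb$ with $(q,\bfb)$ bounded, lands on $A_K(p^h;C_{J_0})\ll p^{-h(1-\eps)}$ --- exactly $q^{-1+\eps}$, with no extra $\sigma$. The block decomposition alone therefore cannot produce the bound $q^{-1-\sigma+\eps}$ required for absolute convergence of the singular series.

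What you are missing is the ingredient the paper inserts \emph{before} invoking the blocks: because $C$ dominates the two-branch function $F(x)$ of (\ref{DomFunc}) (and not merely $\frac{x-\rem(s,r)}{\lfloor s/r\rfloor}$), one may apply Lemma~\ref{LemmaWeylTypeBound} to deduce that some single column $j_0$ satisfies $\max_{(q,\bfa)=1}|S_K(q,\bfa;C_{\{j_0\}})|\ll q^{1-\sigma}$ for some $\sigma>0$ --- this is a Weyl-type inequality, read off from the minor-arc supremum bound with $P=Q=q$, and it is valid precisely because $(q,\bfa)=1$ forces $\bfa/q$ onto the minor arcs. Removing $j_0$ yields $A_K(q;C)\ll q^{-\sigma}A_K(q;C_{J_0})$, and \emph{only then} does one run the partitionable-block argument on $C_{J_0}=C_{[s]\setminus\{j_0\}}$, which is still guaranteed to contain a partitionable $r\times rK(K+1)$ submatrix thanks to the second branch of $F$. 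Combining $q^{-\sigma}$ with the $q^{-1+\eps}$ from the blocks gives $q^{-1-\sigma+\eps}$. Your proposal never uses the second branch of $F$, never takes out a column for the Weyl saving, and consequently has no source for the strictly negative exponent $-\sigma$; the ``bookkeeping'' paragraph papers over this by assuming each block average already saves $q^{-(1+\sigma)}$, which is the very thing that needs (and fails) to be established. A secondary, smaller issue: your ``decoupling'' of $\prod_{l=1}^n|S_K(p^t,\bfa;C_l)|$ into independent block averages is not licit as written, since all blocks depend on the same $\bfa$; the paper instead applies the inequality $|a_1\cdots a_n|\le|a_1|^n+\cdots+|a_n|^n$ to single out one worst block $l_0$ raised to the $K(K+1)$-th power, which is what makes the subsequent computation run.
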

\begin{proof}
    We begin by defining a function which will be useful later. Let $\bfb \in \Z^{K \times s}$ be given, then we define
    \[S_K^*(q,\bfb) = \prod_{1 \le j \le s} \sum_{1 \le u \le q} e\left( \sum_{1 \le k \le K} b_{k,j}u^k \right).\]
    Given this definition one may note that the following holds
    \[S_K(q,\bfa;C) = S_K^*(q,\bfa^T C).\]
    
    Via an application of Lemma \ref{LemmaWeylTypeBound} to the minor arcs $\grm(q)$, we see that there exists $j_0 \in [s]$ and $\sigma > 0$ for which
    \[\max_{\substack{1 \le \bfa \le q \\ (q,\bfa) = 1}}|S_K(q,\bfa ;C_{\{j_0\}})| \ll q^{1-\sigma}.\]
    Setting $J_0 = [s]\backslash \{j_0\}$ we obtain
    \begin{equation}\label{SingWeylTypeBound}
        A_K(q;C) \ll q^{-\sigma} A_K(q,C_{J_0}).
    \end{equation}
    Then since $C_{J_0}$ contains a partitionable $r \times rK(K+1)$ matrix one may without loss of generality write 
    \[C_{J_0} = [C_1,\ldots,C_{K(K+1)},E],\]
    where each $C_l$ is a nonsingular $r \times r$ matrix and $E$ is a $r \times (s-1-rK(K+1))$ matrix. Via an application of H\"{o}lder's inequality and the trivial inequality (\ref{triveq}) there exists $1 \le l_0 \le K(K+1)$ for which we have the asymptotic bound
    \begin{align*}
        A_K(q;C_{J_0}) &\ll q^{-rK(K+1)} \sum_{\substack{1 \le \bfa \le q \\ (q,\bfa) = 1}}|S_K(q,\bfa;C_{l_0})|^{K(K+1)} \\
        &= q^{-rK(K+1)} \sum_{\substack{1 \le \bfa \le q \\ (q,\bfa) = 1}}|S_K^*(q,\bfa^T C_{l_0})|^{K(K+1)} .
    \end{align*}

    Since the matrix $C_{l_0}$ is invertible one has that the condition $(q,\bfa) = 1$ implies $(q,\bfa^T C_{l_0}) = O(1)$. Additionally since the matrix $C$ here is fixed one similarly has that the elements of $\bfa^T C_{l_0}$ are $O(q)$. 
    Thus there exists constants $c_0$ and $c_1$ depending on at most $C_{J_0}$ for which the following inequality holds
    \[A_K(q;C_{J_0}) \ll q^{-rK(K+1)} \sum_{\substack{ |\bfb| \le c_0 q \\ (q,\bfb) \le  c_1}} |S_K^*(q,\bfb)|^{K(K+1)}, \]
    where $\bfb = [\bfb_1,\ldots,\bfb_r]$ are $K \times r$ matrices. Applying \cite[Theorem 7.1]{Vaughan1997} one obtains has the bound 
    \[S_K^*(q,\bfb) \ll \left(\prod_{1 \le j \le r}(q,\bfb_j)\right)^{1/K} q^{r(1-1/K) + \eps}.\]
    Thus we conclude that
    \[A_K(q;C_{J_0}) \ll q^{-r(K+1) + \epsilon} \sum_{\substack{|\bfb| \le c_0 q \\ (q,\bfb) \le  c_1}}\left(\prod_{1 \le j \le r}(q,\bfb_j)\right)^{K+1}. \]
    Since $A_K(q;C_{J_0})$ is multiplicative in $q$ we need only establish the bound for the case when $q$ is a prime power $p^h$. 

    Given an $r$-dimensional vector $\bfe$ of positive integers we let $\Phi(p^h;\bfe)$ denote the quantity of $K \times r$ matrices, $\bfb$, satisfying 
    \begin{equation}\label{MultConds}
        |\bfb| \le c_0 p^h \quad \text{and} \quad (p^h,\bfb_j) = p^{e_j}.
    \end{equation}
    Note that for any $\bfb$ counted by $\Phi(p^h;\bfe)$ we have that 
    \[\quad (p^h,\bfb)  = p^{\min \bfe},\]
    thus we have the equality
    \[\sum_{\substack{|\bfb| \le c_0 p^h \\ (p^h,\bfb) \le  c_1}}\left(\prod_{1 \le j \le r}(p^h,\bfb_j)\right)^{K+1} = \sum_{\substack{0 \le \bfe \le h \\ \min{\bfe} \le \log_p(c_1)}}p^{(K+1) \|\bfe\|_1} \Phi(p^h;\bfe).\]
    
    Note that for any given choice of $0 \le e_j \le h$ one has that there at most $O(p^{K(h-e_j)})$ valid choices for $\bfb_j$ satisfying (\ref{MultConds}). One then concludes that
    \[\Phi(p^h;\bfe) \ll p^{K(rh-\|\bfe\|_1)}.\]
    Hence
    \begin{align*}
        A_K(p^h;C_{J_0}) &\ll p^{-h(r - \eps)} \sum_{\substack{0 \le \bfe \le h \\ \min{\bfe} \le \log_p(c_1)}} p^{\|\bfe\|_1} \ll p^{h(-r +(r-1)+ \eps)} = p^{-h(1 - \eps)}.
    \end{align*}
    Thus we establish that $A_K(q;C_{J_0})\ll q^{-1+\eps}$. Combining this with (\ref{SingWeylTypeBound}) we establish the lemma.
\end{proof}
We conclude that $\grS_K$ converges absolutely and there exists $\sigma > 0$ for which one has
\[|\grS_K - \grS_K(Q)| \ll  Q^{-\sigma+\eps}.\]

\subsection{The Singular Integral}

Here we will show that the complete singular integral
\begin{equation}\label{CompSingInt}
    J_K(P;C) := \lim_{Q \to \infty} \int_{\substack{\bfgam \in \R^{r \times K} \\ |\bfgam_{k}| \le QP^{-k}}} I_K(P,\bfgam;C) \d\bfgam
\end{equation}
is absolutely convergent. We begin with a slight simplification, note that via a change of variables we obtain
\[J_K(P;C) = P^{s- \frac{rK(K+1)}{2}} J_K(1;C).\]
Thus, to prove (\ref{CompSingInt}) converges absolutely it suffices to show that the integral
\[J_K(1;C) = \lim_{Q \to \infty} \int_{|\bfgam| \le Q} I_K(1,\bfgam;C) \d \bfgam,\]
converges absolutely.
\begin{lemma}\label{LemmaSingIntBound}
    Let $C$ be an $r \times s$ with $s>rK(K+1)$ and suppose $C$ dominates
    \[\frac{x- \rem (s,r)}{\floor{\frac{s}{r}}}.\]
    Then one has that
    \[\lim_{Q \to \infty} \int_{|\bfgam| \le Q} I_K(1,\bfgam;C) \d \bfgam\]
    converges absolutely.
\end{lemma}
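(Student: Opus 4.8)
The plan is to run the argument of Lemma~\ref{LemmaMeanValueBound} in the archimedean setting: replace the exponential sum $f_K$ by the exponential integral $I_K$, and replace Vinogradov's mean value theorem by the classical convergence of the associated complete singular integral. Since absolute convergence of $\lim_{Q\to\infty}\int_{|\bfgam|\le Q}I_K(1,\bfgam;C)\,\d\bfgam$ amounts precisely to the bound $\int_{\R^{r\times K}}|I_K(1,\bfgam;C)|\,\d\bfgam<\infty$, it is this that I would establish.

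First I would extract a block structure from $C$. Since $C$ dominates $\frac{x-\rem (s,r)}{\floor{s/r}}$ and $s>rK(K+1)$, \cite[Lemma 1]{Low1988} furnishes a partitionable $r\times rK(K+1)$ submatrix of $C$ (as in the proof of Lemma~\ref{LemmaMeanValueBound}); reordering columns and collecting the remaining $s-rK(K+1)$ columns into a matrix $E$, we may write $C=[C_1,\dots,C_{K(K+1)},E]$ with each $C_l=[\bfc_{l,1},\dots,\bfc_{l,r}]\in\Z^{r\times r}$ nonsingular and $E\in\Z^{r\times(s-rK(K+1))}$. Every column contributes a factor of modulus at most $2$ to $I_K(1,\bfgam;C)$, so
\[|I_K(1,\bfgam;C)|\le 2^{s-rK(K+1)}\prod_{1\le l\le K(K+1)}|I_K(1,\bfgam;C_l)|,\]
and a single application of the trivial inequality (\ref{triveq}) then reduces the lemma to showing that, for each fixed $l$,
\[\int_{\R^{r\times K}}\bigl|I_K(1,\bfgam;C_l)\bigr|^{K(K+1)}\,\d\bfgam<\infty.\]

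To treat this I would use that $C_l$ is invertible. The linear change of variables $\tet_{j,k}=\bfc_{l,j}\cdot\bfgam_k$ $(1\le j\le r,\;1\le k\le K)$ is a bijection of $\R^{r\times K}$ onto itself with Jacobian $(\det C_l)^{K}\neq 0$, and it decouples the $r$ factors of $I_K(1,\bfgam;C_l)$, so that
\[\int_{\R^{r\times K}}\bigl|I_K(1,\bfgam;C_l)\bigr|^{K(K+1)}\,\d\bfgam=|\det C_l|^{-K}\left(\int_{\R^{K}}\left|\int_{-1}^{1}e\bigl(\tet_1 z+\dots+\tet_K z^{K}\bigr)\,\d z\right|^{K(K+1)}\d\bftet\right)^{\!r}.\]
It therefore remains to prove that the complete singular integral of Vinogradov type $\int_{\R^{K}}\bigl|\int_{-1}^{1}e(\tet_1 z+\dots+\tet_K z^{K})\,\d z\bigr|^{2t}\,\d\bftet$ is finite for $2t=K(K+1)$. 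For this I would invoke the standard uniform bound
\[\left|\int_{-1}^{1}e\bigl(\tet_1 z+\dots+\tet_K z^{K}\bigr)\,\d z\right|\ll\min\left\{1,\;\bigl(1+\max_{1\le k\le K}|\tet_k|\bigr)^{-1/K}\right\}\]
for complete exponential integrals (see, e.g., \cite{Vaughan1997}): the region of $\R^{K}$ where $\max_k|\tet_k|\le 1$ contributes $O(1)$, while on its complement the integrand is $\ll(\max_k|\tet_k|)^{-(K+1)}$, which is integrable over $\R^{K}$ because the exponent $K+1$ strictly exceeds the dimension $K$; a dyadic decomposition of $\{\bftet:\max_k|\tet_k|>1\}$ into shells makes this explicit.

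The main obstacle, exactly as in Lemma~\ref{LemmaMeanValueBound}, is this last input from the theory of exponential integrals: the decay estimate must be uniform in $\max_k|\tet_k|$, that is, it must exhibit decay in all $K$ of the coefficients simultaneously, whereas the van der Corput estimate for the $K$th derivative produces decay only in the leading coefficient $\tet_K$ and so by itself fails to be integrable over the remaining $K-1$ variables. Once that estimate is granted, the remaining steps --- the block extraction via \cite{Low1988}, the trivial inequality, and the change of variables --- are routine and parallel the proof of Lemma~\ref{LemmaMeanValueBound}. The numerology is comfortable: any decay exponent larger than $1/(K+1)$ would already suffice, and $1/K$ exceeds that for every $K\ge 2$, so no borderline case arises.
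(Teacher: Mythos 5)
Your proposal is correct and follows essentially the same path as the paper's proof: extract a partitionable $r\times rK(K+1)$ block via \cite[Lemma 1]{Low1988}, discard the extra columns with the trivial bound, apply the inequality (\ref{triveq}) to isolate a single nonsingular $r\times r$ block, decouple it by a linear change of variables, and finish with the Vaughan-type decay estimate $\ll\min\{1,|\bfxi_j|^{-1/K}\}$ to reduce to the convergence of $\int_{\R^K}\min\{1,|\bfx|^{-(K+1)}\}\,\d\bfx$. The only cosmetic difference is that you make the Jacobian factor and the archimedean-vs.-van-der-Corput remark explicit, whereas the paper leaves these implicit.
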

\begin{proof}
    Since $C$ contains a partitionable submatrix of size $r \times rK(K+1)$, we may use the trivial estimate over oscillatory integrals and suppose that $C = [C_1,\ldots,C_{K(K+1)}]$. We begin by first defining a useful function. For any 
    \[\bfxi = [\bfxi_1,\ldots, \bfxi_r] \in \R^{K \times r} \]
    we let
    \[I_K^*(\bfxi) = \prod_{1 \le j \le r} \int_{-1}^{1} e \left( \sum_{1 \le k \le K} \xi_{k,j} z^k \right) \d z.\]
    Given this definition one may note that the following holds
    \[I_K(1,\bfgam;C) = I_K^*(\bfgam^T C).\]

    Making use of (\ref{triveq}), there exists $1 \le l_0 \le K(K+1)$ such that
    \begin{align*}
        \int_{|\bfgam| \le Q} |I_K(1,\bfgam;C)| \d \bfgam &\ll \int_{|\bfgam| \le Q} |I_K(1,\bfgam;C_{l_0})|^{K(K+1)} \d \bfgam \\ 
        &= \int_{|\bfgam| \le Q} |I_K^*(\bfgam^T C_{l_0})|^{K(K+1)} \d \bfgam.
    \end{align*}
    Since $C_{l_0}$ is nonsingular we can make a nonsingular linear change of variables  $\bfxi =  \bfgam^T C_{l_0}$. Additionally, from the argument in \cite[Theorem 7.3]{Vaughan1997}, we deduce that
    \begin{equation}\label{IBoundProd}
        I_K^*(\bfxi) \ll \prod_{1 \le j \le r} \min \{1,|\bfxi_{j}|^{-1/K}\}.
    \end{equation}
    Hence
    \begin{align*}
        \int_{|\bfgam| \le Q} |I_K^*(\bfgam^T C_{l_0})|^{K(K+1)} \d \bfgam &\ll \int_{|\bfxi| \le Q}|I_K^*(\bfxi)|^{K(K+1)} \d \bfxi \\
        &\ll \prod_{1 \le j \le r} \int_{|\bfxi_j| \le Q} \min\{1, |\bfxi_j|^{-K-1)} \}\d \bfxi_j \\
        &= \left( \int_{\substack{\bfx \in \R^K \\ |\bfx| \le Q}} \min\{1, |\bfx|^{-K-1}\} \d \bfx \right)^r.
    \end{align*}
    Thus it is enough to show that the integral
    \[\lim_{Q \to \infty}\int_{\substack{\bfx \in \R^K \\ |\bfx| \le Q}} \min\{1, |\bfx|^{-K-1}\} \d \bfx\]
    converges absolutely. This follows immediately by elementary analysis as one sees that
    \[\int_{\substack{\bfx \in \R^K \\ |\bfx| \ge Q}} \min\{1, |\bfx|^{-K-1}\} \d \bfx \ll Q^{-1}. \]
\end{proof}
Upon combining the results of Lemma \ref{LemmaSingSerBound} and Lemma \ref{LemmaSingIntBound} with (\ref{BndedMinorArc}) we conclude that
\[R_K(P;C) = P^{s-\frac{rK(K+1)}{2}}\left( \sigma_K(C) + o(1)\right),\]
where $\sigma_K(C) = \grS_K(C) J_K(1;C)$. Since we have shown the absolute convergence of both the singular series and singular integral it follows from the arguments of \cite{Brandes2021} that $\sigma_K(C) > 0$ whenever there exists nonsingular real and $p$-adic solutions to (\ref{GenSys}). With this, we have established Theorem \ref{CircMethThm}. 

\section{Analyzing the $K$-multimagic square system}\label{sec:multmagicsquare}
Let $N \ge 4$ and recall from section \ref{sec:overview} the quantity $M_{N,K}(P)$. We now establish the existence of a matrix 
\[C^{\text{magic}}_{N} \in \{-1,0,1\}^{2N \times N^2}\] 
for which $M_{N,K}(P) = R_K(P;C^{\text{magic}}_{N})$. Note that a matrix $\bfZ = (z_{i,j})_{\substack{1 \le i,j \le N}}$ is a \textbf{MMS}$(K,N)$ if and only if for all $1 \le k \le K$ it satisfies the simultaneous conditions
\begin{equation}\label{MSSystem1}
    \sum_{1 \le i \le N} z_{i,j}^{k} = \sum_{1 \le i \le N} z_{i,i}^{k} \quad \text{ for }\quad  1 \le j \le N,
\end{equation}
\begin{equation}\label{MSSystem2}
    \sum_{1 \le j \le N} z_{i,j}^{k} = \sum_{1 \le j \le N} z_{j,N-j+1}^{k} \quad \text{ for }\quad  1 \le i \le N.
\end{equation}
One may wonder if these equations are equivalent to those of a \textbf{MMS}$(K,N)$, indeed it does not seem clear that the main diagonal and anti-diagonal are equal at first glance. One can show that this is implied by the above by simply summing over all $j$ in (\ref{MSSystem1}) and noting that this is equal to summing over all $i$ in (\ref{MSSystem2}). Upon dividing out a factor of $N$ one deduces that (\ref{MSSystem1}) and (\ref{MSSystem2}) imply
\[\sum_{1 \le i \le N} z_{i,i}^{k} = \sum_{1 \le j \le N} z_{j,N-j+1}^{k}.\]
Before we construct a matrix corresponding to this system we must first establish some notational shorthand. Let ${\bf1}_n$, or respectively ${\bf0}_n$, denote a $n$-dimensional vector of all ones, or respectively all zeros. Let $\bfe_n(m)$ denote the $m$th standard basis vector of dimension $n$. For a fixed $N$ we define 
\[D_1(N) = \{(i,j) \in [N]^2: i = j\},\]
and
\[D_2(N) = \{(i,j) \in [N]^2: i +j = N+1\}.\]
For each $(i,j) \in [N]^2$ we define the $2N$-dimensional vectors
\[\bfd_{i,j} = 
\begin{cases}
(\bfe_N(i)-{\bf1_N},\bfe_N(j) - {\bf1_N})  & (i,j) \in D_1(N) \cap D_2(N) \\
(\bfe_N(i)-{\bf1_N},\bfe_N(j))  & (i,j) \in D_1(N) \backslash D_2(N) \\
(\bfe_N(i),\bfe_N(j) - {\bf1_N})  & (i,j) \in D_2(N) \backslash D_1(N) \\
(\bfe_N(i),\bfe_N(j))  & \text{otherwise} \\
\end{cases}\]
Let $\phi: [N^2] \to [N]^2$ be any fixed bijection, then the $2N \times N^2$ matrix
\[C^{\text{magic}}_{N} = C^{\text{magic}}_{N}(\phi) = [\bfd_{\phi(1)},\ldots,\bfd_{\phi(N^2)}],\]
corresponds to the system defined by (\ref{MSSystem1}) and (\ref{MSSystem2}) up to some arbitrary relabeling of variables defined by the bijection $\phi$. For any subset $J \subset [N]^2$ we define $(C^{\text{magic}}_{N})_J := (C^{\text{magic}}_{N})_{\phi^{-1}(J)}$ in the sense defined in section \ref{sec:intro}.

Before we move on it will be useful to define a notion of equivalence between subsets of columns of a matrix. Let $M$ be a $r \times s$ matrix, then we say $J_1,J_2 \subset [s]$ are $M$-isomorphic if 
    \[\im (M)_{J_1} = \im (M)_{J_2}.\]

\begin{lemma}\label{RankGrowthLemma}
    Let $N \ge 4$, then for any $J \subset [N]^2$ we have that
    \[\rank (C^{\text{magic}}_{N})_J \ge \begin{cases}
        \min\left\{\ceil{2\sqrt{|J|}}-1, 2N-2\right\}+ E_N(J) & \text{if }|J| \neq (N-1)^2+1, \\
        2N-3 + E_N(J) & \text{if } |J| = (N-1)^2+1,
    \end{cases}\]
    where 
    \[E_N(J) = \dim\left( \im (C^{\text{magic}}_{N})_J \cap \im \left[\begin{matrix}
        {\bf1}_N & {\bf0}_N \\
        {\bf0}_N & {\bf1}_N
    \end{matrix}\right] \right).\]
\end{lemma}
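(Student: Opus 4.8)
The plan is to reduce the statement to a purely combinatorial computation of the rank of the submatrix $(C^{\text{magic}}_{N})_J$, where the columns are indexed by a set $J \subseteq [N]^2$ of cells in the $N \times N$ grid. First I would set up the right bookkeeping. Each column $\bfd_{i,j}$ lives in $\R^{2N} = \R^N \oplus \R^N$, where the first block of $N$ coordinates is indexed by ``rows'' $1,\dots,N$ and the second by ``columns'' $1,\dots,N$; the cell $(i,j)$ contributes $\bfe_N(i)$ (possibly shifted by $-{\bf1}_N$ on the diagonal $D_1$) in the first block and $\bfe_N(j)$ (possibly shifted by $-{\bf1}_N$ on the anti-diagonal $D_2$) in the second. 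It is cleanest to first compute the rank of the ``unshifted'' matrix whose $(i,j)$ column is $(\bfe_N(i), \bfe_N(j))$ — this is the incidence matrix of a bipartite graph $G_J$ on vertex set $\{r_1,\dots,r_N\} \sqcup \{c_1,\dots,c_N\}$ with an edge $r_i c_j$ for each $(i,j) \in J$ — and then track how the $-{\bf1}_N$ shifts (which occur only for the $O(N)$ cells on the two diagonals) change that rank.

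The key steps, in order. (1) For the unshifted incidence matrix of $G_J$, the rank is $|V(G_J)| - (\text{number of connected components of } G_J)$, by the standard fact that the $\F$-rank of a bipartite incidence matrix equals $\#\text{vertices} - \#\text{components}$ (no parity obstruction since the graph is bipartite, so every cycle has even length). (2) Bound the number of components of $G_J$ from above in terms of $|J|$: if $J$ occupies cells in $a$ distinct rows and $b$ distinct columns, then $G_J$ has $a + b$ vertices of positive degree and at most $|J|$ edges, so at least $a+b - |J|$ of the $a+b$ vertices... more precisely the number of components among the nonisolated vertices is at most $a + b - |J|$ if $G_J$ is a forest, and in general one argues that $|J| \ge a + b - (\#\text{components})$, i.e. $\#\text{components} \ge$ is the wrong direction — one wants an isoperimetric-type inequality: a set of $m$ cells spanning $a$ rows and $b$ columns with $G_J$ connected forces $m \ge a + b - 1$, and conversely the rank $a + b - (\#\text{comp})$ is minimized, for fixed $|J| = m$, by making $G_J$ as ``spread out'' as possible. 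The extremal configuration turns out to be a square block of cells, giving $a = b = \lceil \sqrt{m}\,\rceil$ roughly and rank $\approx 2\sqrt{m} - 1$; I would prove the clean inequality $\rank \ge \lceil 2\sqrt{|J|}\,\rceil - 1$ by an optimization over $(a,b)$ with the constraint $ab \ge |J|$ and rank $\ge a + b - 1$ when connected, handling the disconnected case by noting disconnection only helps (adds back independent vectors across components) — this is exactly the kind of AM–GM / rearrangement argument that pins down $\lceil 2\sqrt{|J|}\,\rceil - 1$, with the special value $|J| = (N-1)^2 + 1$ being the genuinely exceptional case where the $2N-2$ cap interacts with the square-packing bound and one loses one (hence $2N - 3$). (3) Cap at $2N - 2$: the full incidence matrix of the complete bipartite graph $K_{N,N}$ has rank $2N - 1$, but the all-diagonal shifts kill one more dimension, or rather the relevant point is that $(C^{\text{magic}}_{N})_J$ always has the vector $({\bf1}_N, -{\bf1}_N)$-type relation, so $2N - 2$ is the true ceiling; I'd verify the rank of the whole matrix $C^{\text{magic}}_{N}$ is $2N - 2$ directly (sum of all row-block equations equals sum of all column-block equations). (4) Finally, incorporate $E_N(J)$: the shifts by $-{\bf1}_N$ on the diagonal cells are precisely what can push a column of $(C^{\text{magic}}_N)_J$ into the span of $\left[\begin{smallmatrix} {\bf1}_N & {\bf0}_N \\ {\bf0}_N & {\bf1}_N \end{smallmatrix}\right]$; the term $E_N(J)$ measures exactly the extra dimensions gained from this interaction, and the inequality is set up so that the unshifted bound plus $E_N(J)$ is a valid lower bound — this requires checking that the unshifted incidence vectors together with any vectors of the form $({\bf1}_N,{\bf0}_N)$ or $({\bf0}_N,{\bf1}_N)$ realized inside $\im(C^{\text{magic}}_N)_J$ remain linearly independent, which follows because the two summands live in "transverse" positions relative to the support structure.

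The main obstacle I expect is step (2): proving the sharp bound $\rank \ge \min\{\lceil 2\sqrt{|J|}\,\rceil - 1, 2N-2\}$ with the correct off-by-one at $|J| = (N-1)^2 + 1$. The rank-equals-vertices-minus-components formula is standard, but getting the extremal combinatorics right — showing that among all cell-sets of a given size the component count is maximized (equivalently rank minimized) by a near-square axis-aligned block, and correctly identifying $(N-1)^2+1$ as the threshold where the block no longer fits in an $(N-1)\times(N-1)$ arrangement without forcing an extra row or column — needs a careful case analysis. Everything else (the incidence-matrix rank formula, the $2N-2$ cap, and the bookkeeping for $E_N(J)$) is routine linear algebra once the combinatorial core is pinned down. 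I would organize the write-up as: a sublemma on bipartite incidence ranks, a sublemma on the isoperimetric inequality for cell-sets, and then assembly.
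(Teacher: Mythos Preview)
Your broad strategy matches the paper's: reduce to the ``unshifted'' incidence matrix $A_J$ with columns $(\bfe_N(i),\bfe_N(j))$, interpret it as a bipartite graph, use $\rank A_J = (\#\text{vertices}) - (\#\text{components})$, and bound the latter via an AM--GM/isoperimetric argument. That part is fine and is exactly what the paper does.

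The gap is in steps (3)--(4), where you link the unshifted rank to $\rank(C^{\text{magic}}_N)_J$ and to $E_N(J)$. First, your claim that $\rank C^{\text{magic}}_N = 2N-2$ via ``sum of row-block equations equals sum of column-block equations'' is false: the $-{\bf1}_N$ shifts on the diagonal/antidiagonal cells break that symmetry, and in fact $\rank C^{\text{magic}}_N = 2N$ (this is used later in the paper). The $2N-2$ in the lemma is \emph{not} a cap on $\rank(C^{\text{magic}}_N)_J$; it is a cap on the auxiliary quantity $\rank A_J - \dim(\im A_J \cap \im B)$. The paper obtains this by adjoining $B$ and observing that column operations using the two columns of $B$ undo all the $-{\bf1}_N$ shifts, so $\im[(C^{\text{magic}}_N)_J,\,B]=\im[A_J,\,B]$; two applications of the dimension formula then give the \emph{exact} identity
\[
\rank(C^{\text{magic}}_N)_J \;=\; \rank A_J \;-\; \dim(\im A_J \cap \im B) \;+\; E_N(J).
\]
This is the missing key step. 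Without it, your plan to ``add $E_N(J)$ to the unshifted bound'' is unjustified: the unshifted columns are not columns of $(C^{\text{magic}}_N)_J$, so checking that they are independent of vectors in $\im B \cap \im(C^{\text{magic}}_N)_J$ says nothing about $\rank(C^{\text{magic}}_N)_J$.

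The same identity pins down the $(N-1)^2+1$ exception, which you misattribute. The problem is not ``the cap interacting with the square-packing bound''; it is that the disconnected set $[N-1]^2 \sqcup \{(N,N)\}$ has $\rank A_J = 2N-2$ (two components) while still spanning all rows and columns, forcing $\dim(\im A_J\cap\im B)=1$ and hence $\rank A_J - \dim(\im A_J\cap\im B) = 2N-3$. Once you have the identity above, the rest of your outline (irreducible decomposition, rank $=\#\pi_1+\#\pi_2-1$ for irreducible pieces, optimization) goes through essentially as the paper does it.
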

\begin{proof}
    Let 
    \[B = \left[\begin{matrix}
        {\bf1}_N & {\bf0}_N \\
        {\bf0}_N & {\bf1}_N
    \end{matrix}\right],\]
    then by elementary linear algebra, one has the equality
    \[\rank (C^{\text{magic}}_{N})_J + \rank B = \rank [(C^{\text{magic}}_{N})_J,B] + \dim\left( \im (C^{\text{magic}}_{N})_J \cap \im B \right).\]
    Thus we deduce
    \[\rank (C^{\text{magic}}_{N})_J = \rank [(C^{\text{magic}}_{N})_J,B] - 2 + E_N(J).\]
    Thus it suffices to understand the rank of the matrix $[(C^{\text{magic}}_{N})_J,B]$ given $J \subset [N]^2$. Via rank preserving elementary column operations we have that
    \[\im [(C^{\text{magic}}_{N})_J,B] = \im [A_J,B],\]
    where
    \[A_J = 
    \left[\begin{matrix}
        \bfe_{N}(i) \\
        \bfe_{N}(j)
    \end{matrix} \right]_{(i,j) \in J}.
    \]
    This simplification motivates our choice of $B$. Then via the same elementary linear algebra identity used above we further see that
    \[\rank (C^{\text{magic}}_{N})_J = \rank A_J - \dim(\im A_J \cap \im B) + E_N(J).\]
    Here, an important idea that must be considered is that of \emph{projections}, \emph{equivalent sets}, and \emph{irreducible} sets . Given $J \subset [N]^2$ we define 
    \[\pi_1(J) = \{i: (i,j) \in J\}\quand \pi_2(J) = \{j: (i,j) \in J\}.\]
    We call two sets $J_1,J_2$ equivalent if there exists bijections $\phi_1,\phi_2 : [N] \to [N]$ such that either
    \[J_1 = \{(\phi_1(i),\phi_2(j)): (i,j) \in J_2\},\]
    or
    \[J_1 = \{(\phi_1(j),\phi_2(i)): (i,j) \in J_2\}.\]
    Clearly if two sets $J_1,J_2$ are equivalent then $\rank A_{J_1} = \rank A_{J_2}$. We call a set $J \subset [N]^2$ irreducible if whenever $J = A \cup B$, where $A$ and $B$ are nonempty, one either has $\pi_1(A) \cap \pi_1(B) \neq \emptyset$ or $\pi_2(A) \cap \pi_2(B) \neq \emptyset$. Given this definition we see that if $J_1$ is irreducible and $J_2$ is equivalent to $J_1$ then $J_2$ is irreducible. It is also not hard to see that if $J,J' \subset [N]^2$ are sets satisfying
    \[\pi_1(J) \cap \pi_1(J') = \emptyset \quand \pi_2(J) \cap \pi_2(J') = \emptyset,\]
    then $\rank A_{J \cup J'} = \rank A_{J}+\rank A_{J'}$. Thus given an arbitrary set $J \subset [N]^2$, there exists a unique number $l= l(J)$ for which
    \[J = \bigsqcup_{1 \le k \le l} J_k,\]
    where each $J_k$ is irreducible and satisfies
    \[\pi_1(J_i) \cap \pi_1(J_j) = \emptyset \quand \pi_2(J_i) \cap \pi_2(J_j) = \emptyset \text{ for }i \neq j.\]
    Thus given our above observations we deduce that
    \[\rank A_J = \sum_{1 \le k \le l}\rank A_{J_k}.\]
    We additionally note the trivial observation that
    \begin{equation}\label{fudgefactorident}
        \dim(\im A_J \cap \im B)=1 \iff \#\pi_1(J)=\#\pi_2(J)=N.
    \end{equation}
    From the fundamental relation amongst the columns of $A_{[N]^2}$, namely
    \begin{equation}\label{FundRelation}
        \left[\begin{matrix}
        \bfe_{N}(i_1) \\
        \bfe_{N}(j_1)
    \end{matrix} \right] - \left[\begin{matrix}
        \bfe_{N}(i_1) \\
        \bfe_{N}(j_2)
    \end{matrix} \right] + \left[\begin{matrix}
        \bfe_{N}(i_2) \\
        \bfe_{N}(j_2)
    \end{matrix} \right] - \left[\begin{matrix}
        \bfe_{N}(i_2) \\
        \bfe_{N}(j_1)
    \end{matrix} \right] = {\bf0}_{2N},
    \end{equation}
    for irreducible $J$ we have that
    \begin{equation}\label{irrrankident}
        \rank A_{J} = \#\pi_1(J)+\#\pi_2(J)-1.
    \end{equation}
    Thus we reduce to the geometric problem of finding a lower bound for $\#\pi_1(J)+\#\pi_2(J)-1$ in terms of $|J|$. Note that if $(n-1)^2 < |J| \le n(n-1)$ then it is not hard to see that 
    \[\#\pi_1(J)+\#\pi_2(J)-1 \ge 2n-2,\] 
    similarly if $n(n-1) < |J| \le n^2$ then 
    \[\rank A_{J} = \#\pi_1(J)+\#\pi_2(J)-1 \ge 2n-1.\] 
    It is not hard to verify that this implies
    \[\rank A_{J} \ge 2 \sqrt{J}-1.\]
    Thus we deduce
    \[\rank A_J \ge  \sum_{1 \le k \le l}2 \sqrt{|J_k|}-1, \quad |J| = \sum_{1 \le k \le l}|J_k|.\]
    Via some optimization with the given constraints and noting that $|J_k| \ge 1$, one may establish that
    \[\rank A_J \ge 2\sqrt{|J|-l+1}+l-2 \Rightarrow \rank A_J \ge \ceil{2\sqrt{|J|-l+1}}+l-2.\]
    Note that for $|J| > 3$ and $l \ge 2$ one has that
    \[\ceil{2\sqrt{|J|}}-1 \le \ceil{2\sqrt{|J|-l+1}}+l-2,\]
    with equality only when $l = 2$ and $|J| = (n-1)^2+1$ or $|J| = n(n-1)+1$ for some $3 \le n \le N$. Manually checking the cases when $1 \le |J| \le 3$ we deduce the bound
    \[\rank A_J \ge \ceil{2\sqrt{|J|}}-1 \quad \text{ for all } J \subset [N]^2.\]

    Given this lower bound on the rank of $A_J$ it is now worth determining the sets $J$ which achieve this optimal lowest rank, luckily, from the information we have derived thus far this is not a difficult task. From now on, we call a set $J$ \emph{optimal} if one has that
    \[\rank A_J =  \ceil{2\sqrt{|J|}}-1.\]
    Note that if $J$ is an optimal irreducible set then from (\ref{fudgefactorident}) and (\ref{irrrankident}) one must have $\rank A_J = 2N-1$ in order for $\dim(\im A_J \cap \im B)=1$. This allows us to conclude that
    \[\rank A_J - \dim(\im A_J \cap \im B) \ge \min \left\{\ceil{2 \sqrt{J}}-1, 2N-2 \right\},\]
    when $J$ is an optimal irreducible set.

    From our above analysis there are only three cases in which an optimal set may not be irreducible, either $1 \le |J| \le 3$, $|J| = (n-1)^2+1$, or $|J| = n(n-1)+1$ for some $3 \le n \le N$. We must now establish when these optimal sets may satisfy
    \[\dim(\im A_J \cap \im B)=1.\]
    
    If $1 \le |J| \le 3$, then because we are assuming $N \ge 4$ we have from (\ref{fudgefactorident}) that $\dim(\im A_J \cap \im B)=0$. If $|J| = (n-1)^2+1$ then it is not difficult to see that every optimal nonirreducible set of this size is equivalent to the set
    \[[n-1]^2 \sqcup P\]
    where $P \in ([n,N] \cap \Z)^2$. From (\ref{fudgefactorident}) we see that $\dim(\im A_J \cap \im B)=1$ if and only if $J$ is equivalent to
    \[[N-1]^2 \sqcup \{N,N\} \Rightarrow |J| = (N-1)^2+1,\]
    in this case we see that 
    \[\rank A_J - \dim(\im A_J \cap \im B) \ge 2N-3.\]
    Finally, if $|J| = n(n-1)+1$ then from the above one has $\dim(\im A_J \cap \im B)=1$ if and only if $J$ is equivalent to
    \[[N] \times [N-1] \cup \{1,N\},\]
    which is irreducible, thus $J$ must be irreducible in this case and our previous bound on the rank holds. We end by noting that because $0 \le \dim(\im A_J \cap \im B) \le 1$, the lower bounds we have established for optimal sets hold for all sets.
\end{proof}
We now establish our final necessary result.
\begin{lemma}\label{CombinatorialLemma}
    Let $N \ge 4$ and $J \subset [N]^2$. If $|J| > N(N-1)$ then $E_N(J) = 2$.
\end{lemma}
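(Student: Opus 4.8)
The plan is to reduce the statement to a fact about permutations. Observe first that $E_N(J)=2$ is equivalent to the inclusion $\im B\subseteq\im(C^{\text{magic}}_{N})_J$, where $B$ is the $2N\times 2$ matrix $\left[\begin{smallmatrix}{\bf 1}_N & {\bf 0}_N\\ {\bf 0}_N & {\bf 1}_N\end{smallmatrix}\right]$ appearing in the definition of $E_N(J)$: indeed $\im B$ is two-dimensional, so $\dim(\im(C^{\text{magic}}_{N})_J\cap\im B)=2$ forces $\im B\subseteq\im(C^{\text{magic}}_{N})_J$. Thus it suffices to place both $({\bf 1}_N,{\bf 0}_N)$ and $({\bf 0}_N,{\bf 1}_N)$ in the column span of $(C^{\text{magic}}_{N})_J$. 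For this we use the identity, valid for every permutation $\sigma$ of $[N]$,
\[
\sum_{i=1}^{N}\bfd_{i,\sigma(i)}=\bigl((1-f_\sigma){\bf 1}_N,\ (1-g_\sigma){\bf 1}_N\bigr),\qquad f_\sigma=\#\{i:\sigma(i)=i\},\quad g_\sigma=\#\{i:i+\sigma(i)=N+1\},
\]
which follows at once from the definition of $\bfd_{i,j}$ and the fact that $\sum_i\bfe_N(i)=\sum_i\bfe_N(\sigma(i))={\bf 1}_N$. If $\sigma$ is \emph{admissible}, meaning $(i,\sigma(i))\in J$ for all $i$, the left-hand side lies in $\im(C^{\text{magic}}_{N})_J$. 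Hence the lemma follows once we exhibit two admissible permutations $\sigma,\tau$ for which $(1-f_\sigma,1-g_\sigma)$ and $(1-f_\tau,1-g_\tau)$ are linearly independent in $\R^2$: their rational span is all of $\Q^2$, so it contains $({\bf 1}_N,{\bf 0}_N)$ and $({\bf 0}_N,{\bf 1}_N)$, and the corresponding rational combinations of the columns $\bfd_{i,j}$ lie in $\im(C^{\text{magic}}_{N})_J$.

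The hypothesis $|J|>N(N-1)$ gives $|\bar J|\le N-1$ for $\bar J:=[N]^2\setminus J$, which supplies both the needed admissible permutations and enough slack to control their profiles. Admissible permutations exist by K\"onig's theorem applied to the bipartite graph on $[N]\sqcup[N]$ joining $i$ to $j$ when $(i,j)\in J$: a vertex cover by $k$ rows and $\ell$ columns with $k+\ell<N$ would force the complementary $(N-k)\times(N-\ell)$ block of cells — which contains at least $N$ cells — to lie entirely in $\bar J$, contradicting $|\bar J|\le N-1$; hence a perfect matching exists, and the same bound leaves room to adjust an admissible permutation on a few rows and columns. When $\bar J$ meets neither the diagonal $D_1$ nor the anti-diagonal $D_2$, the identity and the reversal $i\mapsto N+1-i$ are both admissible and give the profiles $(1-N,\,1-\delta)$ and $(1-\delta,\,1-N)$ with $\delta=\#(D_1\cap D_2)\in\{0,1\}$; since $N\ge 4$ these are linearly independent (as $(N-1)^2>1$), and the lemma holds in this case.

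For general $\bar J$ one takes $\sigma$ to be the identity, rerouted within $J$ on the at most $N-1$ rows and columns touched by the forbidden diagonal cells $D_1\cap\bar J$ (possible by the K\"onig argument above), so that $f_\sigma$ is as large and $g_\sigma$ as small as the obstructions permit, and symmetrically $\tau$ to follow the anti-diagonal as much as possible, making $g_\tau$ large and $f_\tau$ small; one then checks that the resulting profiles are linearly independent. The step I expect to be the main obstacle is precisely this last one — simultaneously controlling the numbers of fixed points and anti-fixed points of an admissible permutation while keeping it supported in $J$. Two kinds of configuration require a direct, hands-on choice of admissible permutations rather than the above recipe: those in which $\bar J$ is concentrated almost entirely on a single diagonal (so that the ``near-identity'' and ``near-reversal'' permutations degenerate), and small $N$, notably $N=4$, where every three rows of $[4]^2$ contain a pair $\{i,N+1-i\}$ and no single transposition can repair the profile of an admissible permutation, forcing one instead to combine two admissible permutations whose $(f_\sigma,g_\sigma)$-data differ in both coordinates. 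In each of these finitely many shapes of $\bar J$ one writes down the required pair of admissible permutations explicitly and verifies independence by hand.
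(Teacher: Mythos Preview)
Your permutation identity $\sum_{i}\bfd_{i,\sigma(i)}=\bigl((1-f_\sigma){\bf1}_N,(1-g_\sigma){\bf1}_N\bigr)$ is correct and elegant, and the K\"onig argument does guarantee admissible permutations once $|\bar J|\le N-1$. This is a genuinely different route from the paper, which instead splits $J$ into its off-diagonal and on-diagonal parts and performs a case analysis on the rank of the off-diagonal submatrix (with $N=4$ handled by brute force).

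However, the proof is incomplete at exactly the point you yourself identify: you never establish that two admissible permutations with linearly independent profiles $(1-f_\sigma,1-g_\sigma)$ exist. In your framework this \emph{is} the lemma, and deferring it to an unspecified ``finitely many shapes'' case check is not enough---you neither enumerate the shapes nor justify why they are finitely many uniformly in $N$. Concretely, take $N=5$ and $\bar J=\{(1,1),(2,2),(4,4),(5,5)\}$. Every admissible permutation fixing $3$ has $f_\sigma=1$, so both your ``near-identity'' and ``near-reversal'' recipes land on the line $x=0$: e.g.\ $(1\,2)(4\,5)$ gives profile $(0,0)$ and the reversal $(1\,5)(2\,4)$ gives $(0,-4)$. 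One must step outside the recipe---say to the $5$-cycle $(1\,2\,3\,4\,5)$, which gives $(1,0)$---to obtain independence. You anticipated that $\bar J$ concentrated on a diagonal is problematic, but this example shows the repair is not automatic, and a systematic argument covering all such configurations (and their interactions with $D_1\cap D_2$ when $N$ is odd) is still missing. Until that step is written out, the proposal is a promising sketch rather than a proof.
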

\begin{proof}
    If $N = 4$ then one may through brute force computation establish that for every $J \subset [4]^2$ of size $13$ one has $\rank (D_4)_J = 8$ thus $E_4(J) = 2$. Henceforth, we will be working under the assumption that $N \ge 5$. We also take 
    \[\chi_2(N) = \begin{cases}
        1, & \text{when}\, 2 \nmid N, \\
        0, & \text{when}\, 2|N.
    \end{cases}\]
    Let $J \subset [N]^2$ be of size $N(N-1)+1$, and define $S = J \backslash (D_1(N) \cup D_2(N))$ and set $S^c:= J \backslash S$. Due to the size of $J$ it must be the case that $|S| \ge N+1+\chi_2(N)$. We now split into subcases depending on the rank of $(C^{\text{magic}}_{N})_{S^c}$. 
    
    If $\rank (C^{\text{magic}}_{N})_{S^c} = 2N-1$ then $\im (C^{\text{magic}}_{N})_{S^c} = \im A_{[N]^2}$ where $A$ is defined in the proof of Lemma \ref{RankGrowthLemma}. It is not hard to see that every $\bfx \in \im A_{[N]^2}$ satisfies 
    \[\bfx \cdot \left[\begin{matrix}
        {\bf1}_N \\
        -{\bf1}_N
    \end{matrix} \right] = {\bf0}_{2N}.\]
    It is also clear to see that at least one of the elements of $S$ does not lie in $D_1(N) \cap D_2(N)$, say $(i_0,j_0)$. Then one may check that
    \[ \left|\bfd_{i_0,j_0} \cdot  \left[\begin{matrix}
        {\bf1}_N \\
        -{\bf1}_N
    \end{matrix} \right] \right| = N \Rightarrow \bfd_{i_0,j_0} \notin \im A_{[N]^2},\]
    whence $\rank (C^{\text{magic}}_{N})_J > \rank (C^{\text{magic}}_{N})_{S^c} = 2N-1$ and trivially $E_N(J) = 2$. 
    
    Suppose $\rank (C^{\text{magic}}_{N})_{S^c} = 2N-2$, then by our previous analysis on the matrix $A$ it follows that $S^c$ is without loss of generality $C^{\text{magic}}_{N}$-isomorphic to the set 
    \[\overline{S^c} = \{(i,j) \in [N]^2 \backslash (D_1(N) \cup D_2(N)): i \neq N\},\]
    thus $|S^c| \le N^2-3N+2+\chi_2(N)$, whence $|S| \ge 2N-1-\chi_2(N)$. 
    
    If $S$ contains one element in $D_1(N)$, say $(i_1,j_1)$, and another in $D_2(N)$, say $(i_2,j_2)$ neither of which belonging to row $N$ then we are done. This is because there exists $1 \le k \le N$ and $l \neq N$ such that
    \begin{equation*}
        \bfd_{i_1,j_1} - \left[\begin{matrix}
        e_N(i_2) \\
        e_N(k)
    \end{matrix} \right] + \left[\begin{matrix}
        e_N(l) \\
        e_N(k)
    \end{matrix} \right] - \left[\begin{matrix}
        e_N(l) \\
        e_N(j_0)
    \end{matrix} \right] = -\left[\begin{matrix}
        {\bf1}_N \\
        {\bf0}_N
    \end{matrix} \right].
    \end{equation*}
    Because the 3 latter vectors in the left hand side are in $\im (C^{\text{magic}}_{N})_{S^c}$ we obtain 
    \[\left[\begin{matrix}
        {\bf1}_N \\
        {\bf0}_N
    \end{matrix} \right] \in \im (C^{\text{magic}}_{N})_J.\]
    This exact same trick may be done with $(i_2,j_2)$ to obtain
    \[\left[\begin{matrix}
        {\bf0}_N \\
        {\bf1}_N
    \end{matrix} \right] \in \im (C^{\text{magic}}_{N})_J.\]
    Note that the above definitely happens if $2N-1-\chi_2(N) \ge N+2+\chi_2(N)$, which is always true for $N \ge 5$, hence $E_N(J) = 2$.
    
    We now consider the case in which $\rank (C^{\text{magic}}_{N})_{S^c} \le 2N-3$. By our previous analysis on the matrix $A$ we see that the largest set $S^c \subset [N]^2 \backslash (D_1(N) \cup D_2(N))$ for which $\rank (C^{\text{magic}}_{N})_{S^c} \le 2N-3$ has size at most $|S^c| \le N^2 - 4N+5+\chi_2(N)$, this implies that $|S| \ge 3N-4 - \chi_2(N)$. Note that since $|S| \le 2N - \chi_2(N)$ this is impossible for $N \ge 5$. 
\end{proof}
Combining Lemma \ref{RankGrowthLemma} and Lemma \ref{CombinatorialLemma} and noting that the for any $1 \le m < N^2$ one must have that
\[0 \le \min_{|J| = m+1}\rank(C^{\text{magic}}_{N})_J - \min_{|J| = m}\rank(C^{\text{magic}}_{N})_J \le 1,\]
we conclude that for $|J| \neq (N-1)^2+1$ one has
\[\rank(C^{\text{magic}}_{N})_J \ge \begin{cases}
    \ceil{2\sqrt{|J|}}-1 & 1 \le |J| \le N(N-1)-1, \\
    |J| - N^2+3N-1 & N(N-1)-1 \le |J| \le N(N-1)+1, \\
    2N & N(N-1)+1 \le |J| \le N^2,
\end{cases}\]
and
\[\rank(C^{\text{magic}}_{N})_J \ge 2N-3,\]
when $|J| = (N-1)^2+1$.
Thus one may deduce that the matrix $C^{\text{magic}}_{N}$ dominates $F(x)$ from (\ref{DomFunc}) and so we have established the quantitative Hasse principle for \textbf{MMS}$(K,N)$ for all $N > 2K(K+1)$. We now go further and focus on establishing the existence of nonsingular local solutions.

\section{Existence of nonsingular local $K$-multimagic squares}\label{sec:nonsingsoln}
We begin by introducing some notation, for any $s$-dimensional vector $\bfxi$, we define $\text{diag}(\bfxi)$ to be the $s \times s$ diagonal matrix which has the elements of $\bfxi$ as its diagonal entries. 

Let us now consider the Jacobian matrix associated to the equations (\ref{GenSys}) defined by the matrix $C$ evaluated at $\bfx$,
\[\bfJ(\bfx;C) = \left[\begin{matrix}
    \bfJ_1(\bfx,C) \\
    \vdots \\
    \bfJ_K(\bfx,C)
\end{matrix}  \right],\text{where}\quad \bfJ_k = k C \text{diag}(\bfx)^{k-1}.\]
If we replace $C$ with $C_J$ for any $J \subset [s]$, then one certainly has that
\[\rank\left( \bfJ(\bfx;C) \right) \ge \rank \left(\bfJ(\bfx_J;C_J) \right).\]
Thus if we wish to show the Jacobian has full rank it suffices to show that the Jacobian associated to a submatrix has full rank. This lets us reduce to the case in which the matrix $C$ is a partitionable $r \times rK$ matrix $C = [M_1,\ldots,M_{K}]$. Next we define the block diagonal matrix
\[\Tilde{C} = \left[ \begin{matrix}
    M_1^{-1} & {\bf0} & \cdots & {\bf0}  \\
    {\bf0} & M_2^{-1} & \cdots & {\bf0}  \\
    \vdots & \vdots & \ddots & \vdots  \\
    {\bf0} & {\bf0} & \cdots & M_{K}^{-1}
\end{matrix} \right].\]
Note that this matrix is clearly nonsingular, whence
\begin{align}\label{RankEQ}
    \rank \left(\bfJ(\bfx;C)\right)= \rank \left(\bfJ(\bfx;C) \Tilde{C}\right) 
\end{align}
This simplifies our problem because one has that
\[\bfJ_k(\bfx;C) \Tilde{C} = k  \left[\text{diag}(\bfx_1)^{k-1},\text{diag}(\bfx_2)^{k-1},\ldots,\text{diag}(\bfx_{K})^{k-1}\right],\]
where
\[\bfx_l = (x_{1+2N(l-1)},\ldots,x_{2Nl}).\]
Then by swapping rows and columns and utilizing (\ref{RankEQ}) one may deduce that
\[\rank \left(\bfJ(\bfx;C)\right) = \sum_{1 \le j \le 2N}\rank V'(x_{j},x_{j+2N},\ldots,x_{j+2N(K-1)}),\]
where
\[V'(\bfy) = \left[\begin{matrix}
    1 & 1 & \cdots & 1 \\
    2y_1 & 2y_2 & \cdots & 2y_{K} \\
    \vdots & \vdots & \ddots & \vdots \\
    Ky_{1}^{K-1} & Ky_{2}^{K-1} & \cdots & Ky_{K}^{K-1}
\end{matrix}  \right].\]
Comparing this matrix to a Vandermonde matrix it is not too hard to see 
\[\det(V'(\bfy)) = K! \prod_{1 \le i<j \le K} (y_i-y_j).\]
Thus we conclude that the Jacobian $\bfJ(\bfx;C)$, defined by $C$ evaluated at $\bfx$ has full rank if there exists $K$ many disjoint ordered subsets 
\[J_l = \{j(l,1),\ldots,j(l,r)\} \subset [s]\] 
for $1 \le l \le K$ which satisfy
\[\rank C_{J_l} = r \text{ for all } 1 \le l \le K\]
and 
\[x_{j(l_1,n)} \neq x_{j(l_2,n)} \text{ for all } 1 \le l_1,l_2 \le K \text{ and } 1 \le n \le r.\]

By \cite[Lemma 1]{Low1988} and Lemma \ref{RankGrowthLemma} one has that $C^{\text{magic}}_{N}$ contains a $2N \times 2NK$ partitionable submatrix because $K \le \floor{N/2}$. Suppose $\bfZ = (z_{i,j})_{1 \le i,j \le N}$ is a \textbf{MMS}$(K,N)$, hence it satisfies
\[\left[ \bfd_{\phi(1)},\ldots,\bfd_{\phi(N^2)} \right] \left[ \begin{matrix}
    z_{\phi(1)} \\
    \vdots \\
    z_{\phi(N^2)}
\end{matrix} \right] = {\bf0}_{N^2},\]
where $\phi$ is any bijection from $[N^2]$ to $[N]^2$. Thus if we establish the existence of a bijection $\phi: [N^2] \to [N]^2$ for which one has
\begin{equation}\label{MaxRank}
    \rank \left[ \bfd_{\phi(1+2N(l-1))},\bfd_{\phi(2+2N(l-1))},\ldots,\bfd_{\phi(2Nl)} \right] = 2N \quad \text{for all} \quad 1 \le l \le K,
\end{equation}
and
\begin{equation}\label{ResCond}
  z_{\phi(n+2N(l_1-1))} \neq z_{\phi(n+2N(l_2-1))} \text{ for all } 1 \le l_1,l_2 \le K \text{ and }1 \le n \le 2N,  
\end{equation}
then by the above work we may conclude that the Jacobian associated to the $K$-multimagic square system evaluated at $\bfZ$ has full rank.

First we must fix suitable disjoint subsets $J_l(N) \subset [N]^2$ of size $2N$ for $1 \le l \le \floor{N/2}$ satisfying
\[\rank \left[ \bfd_{i,j} \right]_{(i,j) \in J_l(N)} = 2N.\]
The explicit definitions of these partitions may not seem straightforward but we will also provide a figure to hopefully illuminate what these partitions look like. Given a fixed $N \ge 4$ we first define our partition in the case where $N$ is even. For every $1 \le l \le \floor{N/2}$ let
\begin{equation*}
    J_l(N) = J^{(1)}_{l}(N) \sqcup J^{(2)}_{l}(N),
\end{equation*}
instead of writing the definition of these sets $J^{(1)}_{l}(N)$ and $J^{(2)}_{l}(N)$ explicitly, we will instead define conditions under which we may determine that a pair $(i,j)$ belong to $J^{(1)}_{l}(N)$ or $J^{(2)}_{l}(N)$. If $N$ is even then
\begin{equation*}
    (i,j) \in J^{(1)}_{l}(N) \iff i-j \equiv 2(l-1) \nmod N,
\end{equation*}
and
\begin{equation*}
    (i,j) \in J^{(2)}_{l}(N) \iff i-j \equiv 2l-1 \nmod N.
\end{equation*}
For the odd case we split into further subcases depending on parity of $\frac{N+1}{2}$. If 
\[j \notin \left[2-\rem\left(\frac{N+1}{2},2\right),\frac{N+3}{2}\right]\] 
then
\[(i,j) \in J^{(1)}_{l}(N) \iff i-j \equiv 2(l-1) \nmod N,\]
if 
\[j \in \left[2-\rem\left(\frac{N+1}{2},2\right),\frac{N+3}{2}\right]\quad \text{and}\quad j \equiv \frac{N+3}{2} \nmod 2\]
then
\[(i,j) \in J^{(1)}_{l}(N) \iff i-j+1 \equiv 2(l-1) \nmod N,\]
and finally if 
\[j \in \left[2-\rem\left(\frac{N+1}{2},2\right),\frac{N+3}{2}\right]\quad \text{and} \quad j \equiv \frac{N+1}{2} \nmod 2\] 
then
\[(i,j) \in J^{(1)}_{l}(N) \iff i-j-1 \equiv 2(l-1) \nmod N.\]
We define $J^{(2)}_{l}(N)$ for the odd case in a similar way. If 
\[j \notin \left[2-\rem\left(\frac{N+1}{2},2\right),\frac{N+3}{2}\right]\] 
then
\[(i,j) \in J^{(2)}_{l}(N) \iff i-j \equiv 2l-1 \nmod N,\]
if 
\[j \in \left[2-\rem\left(\frac{N+1}{2},2\right),\frac{N+3}{2}\right]\quad \text{and} \quad j \equiv \frac{N+3}{2} \nmod 2\] 
then
\[(i,j) \in J^{(2)}_{l}(N) \iff i-j+1 \equiv 2l-1 \nmod N,\]
and finally if 
\[j \in \left[2-\rem\left(\frac{N+1}{2},2\right),\frac{N+3}{2}\right]\quad \text{and} \quad j \equiv \frac{N+1}{2} \nmod 2\] 
then
\[(i,j) \in J^{(2)}_{l}(N) \iff i-j-1 \equiv 2l-1 \nmod N.\]
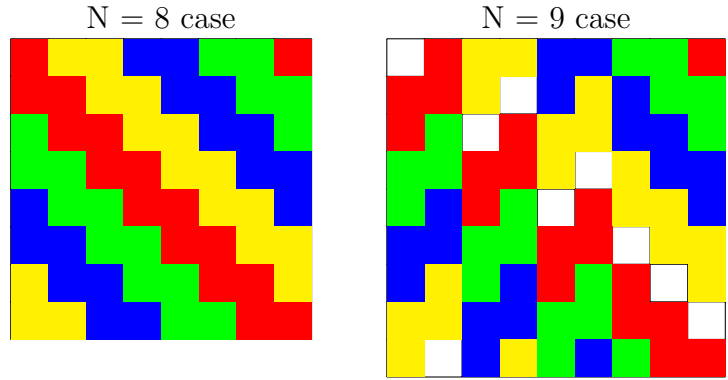
\begin{figure}[htbp]
    \centering
    \begin{tikzpicture}[scale=0.5, y=-1cm]
        \draw (0,0) grid (8,8);
        \foreach \x in {1,...,8}{
            \foreach \y in {1,...,8}{
            \pgfmathtruncatemacro\diag{Mod((\x-\y),8)}
            \ifnum\diag=0
            \fill[red!, opacity=0.3] (\y-1,\x-1) rectangle (\y,\x);
            \fi
            \ifnum\diag=1
            \fill[red!, opacity=0.3] (\y-1,\x-1) rectangle (\y,\x);
            \fi
            \ifnum\diag=2
            \fill[green!, opacity=0.3] (\y-1,\x-1) rectangle (\y,\x);
            \fi
            \ifnum\diag=3
            \fill[green!, opacity=0.3] (\y-1,\x-1) rectangle (\y,\x);
            \fi
            \ifnum\diag=4
            \fill[blue!, opacity=0.3] (\y-1,\x-1) rectangle (\y,\x);
            \fi
            \ifnum\diag=5
            \fill[blue!, opacity=0.3] (\y-1,\x-1) rectangle (\y,\x);
            \fi
            \ifnum\diag=6
            \fill[yellow!, opacity=0.3] (\y-1,\x-1) rectangle (\y,\x);
            \fi
            \ifnum\diag=7
            \fill[yellow!, opacity=0.3] (\y-1,\x-1) rectangle (\y,\x);
            \fi
            }
        }
        \node[above] at (4,0) {N = 8 case};
        
        \begin{scope}[xshift=10cm]
            \draw (0,0) grid (9,9);
        \foreach \x in {1,...,9}{
            \foreach \y in {1,...,9}{
            \pgfmathtruncatemacro\diag{Mod((\x-\y),9)}
            \pgfmathtruncatemacro\ycheck{Mod(\y,2)}
                \ifthenelse{\y<7}{
                    \ifthenelse{\ycheck=1}{
                    \ifnum\diag=0
                    \fill[red!, opacity=0.3] (\y,\x-1) rectangle (\y+1,\x);
                    \fi
                    \ifnum\diag=1
                    \fill[red!, opacity=0.3] (\y,\x-1) rectangle (\y+1,\x);
                    \fi
                    \ifnum\diag=2
                    \fill[green!, opacity=0.3] (\y,\x-1) rectangle (\y+1,\x);
                    \fi
                    \ifnum\diag=3
                    \fill[green!, opacity=0.3] (\y,\x-1) rectangle (\y+1,\x);
                    \fi
                    \ifnum\diag=4
                    \fill[blue!, opacity=0.3] (\y,\x-1) rectangle (\y+1,\x);
                    \fi
                    \ifnum\diag=5
                    \fill[blue!, opacity=0.3] (\y,\x-1) rectangle (\y+1,\x);
                    \fi
                    \ifnum\diag=6
                    \fill[yellow!, opacity=0.3] (\y,\x-1) rectangle (\y+1,\x);
                    \fi
                    \ifnum\diag=7
                    \fill[yellow!, opacity=0.3] (\y,\x-1) rectangle (\y+1,\x);
                    \fi
                    }{
                    \ifnum\diag=0
                    \fill[red!, opacity=0.3] (\y-2,\x-1) rectangle (\y-1,\x);
                    \fi
                    \ifnum\diag=1
                    \fill[red!, opacity=0.3] (\y-2,\x-1) rectangle (\y-1,\x);
                    \fi
                    \ifnum\diag=2
                    \fill[green!, opacity=0.3] (\y-2,\x-1) rectangle (\y-1,\x);
                    \fi
                    \ifnum\diag=3
                    \fill[green!, opacity=0.3] (\y-2,\x-1) rectangle (\y-1,\x);
                    \fi
                    \ifnum\diag=4
                    \fill[blue!, opacity=0.3] (\y-2,\x-1) rectangle (\y-1,\x);
                    \fi
                    \ifnum\diag=5
                    \fill[blue!, opacity=0.3] (\y-2,\x-1) rectangle (\y-1,\x);
                    \fi
                    \ifnum\diag=6
                    \fill[yellow!, opacity=0.3] (\y-2,\x-1) rectangle (\y-1,\x);
                    \fi
                    \ifnum\diag=7
                    \fill[yellow!, opacity=0.3] (\y-2,\x-1) rectangle (\y-1,\x);
                    \fi
                    }
                }{
                \ifnum\diag=0
                \fill[red!, opacity=0.3] (\y-1,\x-1) rectangle (\y,\x);
                \fi
                \ifnum\diag=1
                \fill[red!, opacity=0.3] (\y-1,\x-1) rectangle (\y,\x);
                \fi
                \ifnum\diag=2
                \fill[green!, opacity=0.3] (\y-1,\x-1) rectangle (\y,\x);
                \fi
                \ifnum\diag=3
                \fill[green!, opacity=0.3] (\y-1,\x-1) rectangle (\y,\x);
                \fi
                \ifnum\diag=4
                \fill[blue!, opacity=0.3] (\y-1,\x-1) rectangle (\y,\x);
                \fi
                \ifnum\diag=5
                \fill[blue!, opacity=0.3] (\y-1,\x-1) rectangle (\y,\x);
                \fi
                \ifnum\diag=6
                \fill[yellow!, opacity=0.3] (\y-1,\x-1) rectangle (\y,\x);
                \fi
                \ifnum\diag=7
                \fill[yellow!, opacity=0.3] (\y-1,\x-1) rectangle (\y,\x);
                \fi
                }
            }
        }
            \node[above] at (4.5,0) {N = 9 case};
        \end{scope}
    \end{tikzpicture}
    \caption{Example of this partition represented via a coloring on $N \times N$ grid.}
\end{figure}
We now establish that this partition satisfies the required conditions.
\begin{lemma}\label{PartitionLemma}
    Let $N \ge 4$ be given, then the sets $J_l$ defined previously for $1 \le l \le \floor{N/2}$ satisfy
    \[\rank \left[ \bfd_{i,j} \right]_{(i,j) \in J_l(N)} = 2N.\]
\end{lemma}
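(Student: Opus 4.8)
The plan is to prove that the $2N\times 2N$ matrix $M_l:=\bigl[\bfd_{i,j}\bigr]_{(i,j)\in J_l(N)}$ is invertible; being square, it then automatically has rank $2N$. Two structural facts about $J_l(N)$ will drive everything, and both are read off directly from the (lengthy) definitions, verified separately for $N$ even and $N$ odd and, in the odd case, according to the parities of $(N+1)/2$ and of the column index $j$: (i) each of $J^{(1)}_l(N)$ and $J^{(2)}_l(N)$ is the graph of a permutation of $[N]$, i.e. contains exactly one pair in every row and exactly one in every column — in the even case these are the broken diagonals $i-j\equiv 2(l-1)$ and $i-j\equiv 2l-1\pmod N$, and the $\pm1$ shifts of the odd case do not disturb bijectivity; and (ii) the bipartite graph $G_l$ with vertex set $(\text{rows})\sqcup(\text{columns})$ and edge set $J_l(N)$ is a single $2N$-cycle along which the $J^{(1)}_l$- and $J^{(2)}_l$-edges alternate, equivalently the composite of the two column$\to$row bijections is an $N$-cycle (in the even case it is simply $j\mapsto j-1\bmod N$). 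In particular $J_l(N)$ meets every row and every column exactly twice, and $G_l$ is connected.

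I would then reduce invertibility of $M_l$ to triviality of its left kernel. Write a candidate left-null vector as a pair $(p,q)\in\R^N\times\R^N$ and set $P=\sum_i p_i$, $Q=\sum_j q_j$. Since $\bfd_{i,j}=\bigl(\bfe_N(i)-[(i,j)\in D_1(N)]\,{\bf1}_N,\ \bfe_N(j)-[(i,j)\in D_2(N)]\,{\bf1}_N\bigr)$, the orthogonality conditions $(p,q)\cdot\bfd_{i,j}=0$ become
\[
p_i+q_j=P\,[(i,j)\in D_1(N)]+Q\,[(i,j)\in D_2(N)]\qquad\text{for all }(i,j)\in J_l(N).
\]
Summing all $2N$ of these (each index occurs twice) yields
\[
\bigl(2-|J_l(N)\cap D_1(N)|\bigr)P+\bigl(2-|J_l(N)\cap D_2(N)|\bigr)Q=0,
\]
while summing with alternating signs around the $2N$-cycle makes the left side telescope to $0$ and, because the edges alternate between the two families, produces
\[
\bigl(|J^{(1)}_l\cap D_1|-|J^{(2)}_l\cap D_1|\bigr)P+\bigl(|J^{(1)}_l\cap D_2|-|J^{(2)}_l\cap D_2|\bigr)Q=0.
\]
One then evaluates the four intersection numbers $|J^{(m)}_l(N)\cap D_n(N)|$ for each $l$ and checks that this $2\times2$ linear system in $(P,Q)$ is nonsingular for all $N\ge4$, forcing $P=Q=0$. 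Once that is known the displayed equations read $p_i+q_j=0$ on every edge of the connected graph $G_l$, so $p=c\,{\bf1}_N$ and $q=-c\,{\bf1}_N$ for some scalar $c$; then $P=Nc=0$ gives $c=0$, hence $(p,q)=0$ and $M_l$ is invertible.

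The even case of this counting step is short: $|J_l\cap D_1|=N$ if $l=1$ and $0$ otherwise, while $|J^{(1)}_l\cap D_2|=0$ and $|J^{(2)}_l\cap D_2|=2$ throughout, so the relevant $2\times2$ determinant equals $2(N-2)$ or $-4$ and is never zero for $N\ge4$. The odd case is where the real work lies and is the main obstacle: one must track how the main diagonal (offset $i-j\equiv0$) and the antidiagonal (whose offsets $i-j$ run over all residues mod $N$, meeting each broken diagonal exactly once) are redistributed among the $J^{(m)}_l$ by the $\pm1$ shifts, pin down the four counts for every $l$, and confirm nonsingularity; I expect this to need a modest sub-case analysis and, as in Lemma~\ref{CombinatorialLemma}, a direct check of $N=4$ and possibly of one or two further small residue classes. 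Everything else — bijectivity of the $J^{(m)}_l$, the single-cycle structure of $G_l$, the two summation identities, and the concluding connectivity argument — is uniform in $N$ and routine.
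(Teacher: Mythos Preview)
Your approach is sound and genuinely different from the paper's. The paper does not compute a kernel at all; it reuses the identity
\[
\rank (C^{\text{magic}}_{N})_{J} \;=\; \rank A_{J} - \dim(\im A_{J} \cap \im B) + E_N(J)
\]
from Lemma~\ref{RankGrowthLemma} and then argues two points: (a) the ``ladder'' structure of $J_l(N)$ gives $\im A_{J_l(N)}=\im A_{[N]^2}$, so that $\rank A_{J_l}-\dim(\im A_{J_l}\cap\im B)=2N-2$; and (b) $E_N(J_l(N))=2$, by exhibiting $({\bf1}_N,{\bf0}_N)^T$ and $({\bf0}_N,{\bf1}_N)^T$ explicitly as combinations of the $\bfd_{i,j}$ over $J_l(N)$. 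Your left-kernel argument is more self-contained---it needs nothing from Lemma~\ref{RankGrowthLemma}---but in exchange you must compute the four intersection sizes and verify the $2\times2$ determinant. Both proofs ultimately rest on the same structural inputs about $J_l(N)$: that each $J^{(m)}_l$ is the graph of a permutation, that their union is connected, and the values of $|J^{(m)}_l\cap D_n|$. Two observations make your odd case much shorter than you fear. First, the column-dependent shift $\epsilon(j)$ is the \emph{same} for both families, so $\sigma_2(j)\equiv\sigma_1(j)+1\pmod N$ identically; hence $\sigma_2^{-1}\sigma_1$ is conjugate to the cyclic shift and is automatically an $N$-cycle, with no case analysis needed. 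Second, using $|J^{(1)}_l\cap D_2|=0$ and $|J^{(2)}_l\cap D_2|=1$ (which the paper records for odd $N$), your determinant collapses to $2\bigl(|J^{(2)}_l\cap D_1|-1\bigr)$; one checks easily that $J^{(2)}_l$ misses $D_1$ entirely for $l\ge2$ and meets it in at least two points when $l=1$, so nonsingularity follows.
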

\begin{proof}
    We begin by recalling the equation used in Lemma \ref{RankGrowthLemma}, namely
    \[\rank (C^{\text{magic}}_{N})_{J} = \rank A_{J} - \dim(\im A_{J} \cap \im B) + E_N(J),\]
    where
    \[A_J = 
    \left[\begin{matrix}
        \bfe_{N}(i) \\
        \bfe_{N}(j)
    \end{matrix} \right]_{(i,j) \in J},\quad B = \left[\begin{matrix}
        {\bf1}_N & {\bf0}_N \\
        {\bf0}_N & {\bf1}_N
    \end{matrix}\right], \]
    and
    \[ E_N(J) = \dim\left( \im (C^{\text{magic}}_{N})_J \cap \im B \right).\]
    Note that if we can show that $\im A_{J_l(N)} = \im A$ and $E_N(J_l(N)) = 2$ for all $1 \le l \le \floor{N/2}$, then we are done because by our previous analysis this implies that 
    \[\rank A - \dim(\im A \cap \im B) = 2N-2,\]
    and hence $\rank (C^{\text{magic}}_{N})_{J} = 2N$.

    A few important observations about these sets $J_l(N)$ should be noted, for every $N \ge 4$ and $1 \le l \le \floor{N/2}$ the sets $J^{(1)}_{l}(N)$ and $J^{(2)}_{l}(N)$ both have size $N$ and contain a single element in every row and column. Additionally $J^{(1)}_{l}(N) \cap D_2(N) = \emptyset$ and 
    \[|J^{(2)}_{l}(N) \cap D_2(N)| = \begin{cases}
    2 & 2 \mid N \\
    1 & 2 \nmid N
    \end{cases}\]
    
    Note that due to the ``ladder like'' structure of $J_l(N)$ we may, via applications of the fundamental relation (\ref{FundRelation}), deduce that
    \[\im A_{J_l} = \im A.\]
    All that is left to show is that $E_N(J_l) = 2$ for all $1 \le l \le \floor{N/2}$. By the above observations it is not hard to see that
    \[\left[ \begin{matrix}
    {\bf1}_N \\
    {\bf0}_N
    \end{matrix} \right] \in \im \left( \rem(N,2)\sum_{(i,j) \in J^{(1)}_l(N)}\bfd_{i,j}+ \sum_{(i,j) \in J^{(2)}_l(N)}\bfd_{i,j} \right) \subset \im (C^{\text{magic}}_{N})_{J_l(N)},\]
    and
    \[\left[ \begin{matrix}
    {\bf0}_N \\
    {\bf1}_N
    \end{matrix} \right] \in \im\left( \sum_{(i,j) \in J^{(1)}_l(N)}\bfd_{i,j}, \left[ \begin{matrix}
    {\bf1}_N \\
    {\bf0}_N
    \end{matrix} \right] \right) \subset \im (C^{\text{magic}}_{N})_{J_l}(N),\]
    thus we establish that $E_N(J_l(N)) = 2$ and we are done.
\end{proof}
With this partition in place we may now fix a suitable bijection $\phi: [N^2] \to [N]^2$ which respects this partition. We begin by first fixing a bijection
\[\Tilde{\psi}: J_1(N) \to [2N].\]
We now use the important property that each of our partitions $J_l(N)$ are precisely a previous partition which has been shifted by two rows. Define 
\[\psi: \bigsqcup_{1 \le l \le K}J_l(N) \to [2NK]\] 
piecewise via the relation
\[\psi(i,j) = \Tilde{\psi}(\rem(i-2l+1)+1,j)+2Nl \text{ for }(i,j) \in J_l(N).\]

Finally, we take $\phi$ to be any bijection from $[N^2]$ to $[N]^2$ which satisfies
\begin{equation*}
    \phi \circ \psi^{-1} = \text{Id}_{[2NK]}.
\end{equation*}
One may refer to figure \ref{fig2} for an example of such a bijection. This bijection $\phi$ satisfies (\ref{MaxRank}), additionally we note that for any $1 \le n \le 2N$ the set
\[\{\phi(n),\phi(n+2N),\ldots,\phi(n+2N(K-1))\} \subset [N]^2,\]
lies in a single column. Thus, if we can find a \textbf{MMS}$(K,N)$, say $\bfZ \in \Z^{N \times N}$, which has distinct values along the columns we immediately have that (\ref{ResCond}) is satisfied and thus establish the existence of a nonsingular integer solution and hence trivially nonsingular local solutions.

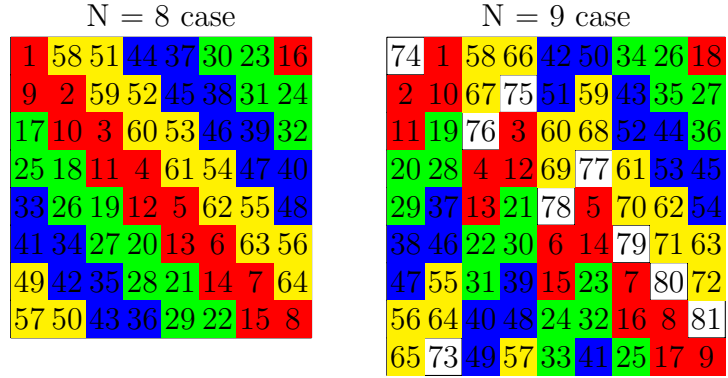
\begin{figure}[!htbp]
    \centering
    \begin{tikzpicture}[scale=0.5, y=-1cm]
        \draw (0,0) grid (8,8);
        \foreach \x in {1,...,8}{
            \foreach \y in {1,...,8}{
            \pgfmathtruncatemacro\diag{Mod((\x-\y),8)}
            \ifnum\diag=0
            \fill[red!, opacity=0.3] (\y-1,\x-1) rectangle (\y,\x);
            \pgfmathtruncatemacro\entree{\y+\diag*8}
            \node at (\y-.5,\x-.5) {\entree};
            \fi
            \ifnum\diag=1
            \fill[red!, opacity=0.3] (\y-1,\x-1) rectangle (\y,\x);
            \pgfmathtruncatemacro\entree{\y+\diag*8}
            \node at (\y-.5,\x-.5) {\entree};
            \fi
            \ifnum\diag=2
            \fill[green!, opacity=0.3] (\y-1,\x-1) rectangle (\y,\x);
            \pgfmathtruncatemacro\entree{\y+\diag*8}
            \node at (\y-.5,\x-.5) {\entree};
            \fi
            \ifnum\diag=3
            \fill[green!, opacity=0.3] (\y-1,\x-1) rectangle (\y,\x);
            \pgfmathtruncatemacro\entree{\y+\diag*8}
            \node at (\y-.5,\x-.5) {\entree};
            \fi
            \ifnum\diag=4
            \fill[blue!, opacity=0.3] (\y-1,\x-1) rectangle (\y,\x);
            \pgfmathtruncatemacro\entree{\y+\diag*8}
            \node at (\y-.5,\x-.5) {\entree};
            \fi
            \ifnum\diag=5
            \fill[blue!, opacity=0.3] (\y-1,\x-1) rectangle (\y,\x);
            \pgfmathtruncatemacro\entree{\y+\diag*8}
            \node at (\y-.5,\x-.5) {\entree};
            \fi
            \ifnum\diag=6
            \fill[yellow!, opacity=0.3] (\y-1,\x-1) rectangle (\y,\x);
            \pgfmathtruncatemacro\entree{\y+\diag*8}
            \node at (\y-.5,\x-.5) {\entree};
            \fi
            \ifnum\diag=7
            \fill[yellow!, opacity=0.3] (\y-1,\x-1) rectangle (\y,\x);
            \pgfmathtruncatemacro\entree{\y+\diag*8}
            \node at (\y-.5,\x-.5) {\entree};
            \fi
            }
        }
        \node[above] at (4,0) {N = 8 case};
        
        \begin{scope}[xshift=10cm]
            \draw (0,0) grid (9,9);
        \foreach \x in {1,...,9}{
            \foreach \y in {1,...,9}{
            \pgfmathtruncatemacro\diag{Mod((\x-\y),9)}
            \pgfmathtruncatemacro\ycheck{Mod(\y,2)}
                \ifthenelse{\y<7}{
                    \ifthenelse{\ycheck=1}{
                    \ifnum\diag=0
                    \fill[red!, opacity=0.3] (\y,\x-1) rectangle (\y+1,\x);
                    \pgfmathtruncatemacro\entree{\y+\diag*9}
                    \node at (\y+.5,\x-.5) {\entree};
                    \fi
                    \ifnum\diag=1
                    \fill[red!, opacity=0.3] (\y,\x-1) rectangle (\y+1,\x);
                    \pgfmathtruncatemacro\entree{\y+\diag*9}
                    \node at (\y+.5,\x-.5) {\entree};
                    \fi
                    \ifnum\diag=2
                    \fill[green!, opacity=0.3] (\y,\x-1) rectangle (\y+1,\x);
                    \pgfmathtruncatemacro\entree{\y+\diag*9}
                    \node at (\y+.5,\x-.5) {\entree};
                    \fi
                    \ifnum\diag=3
                    \fill[green!, opacity=0.3] (\y,\x-1) rectangle (\y+1,\x);
                    \pgfmathtruncatemacro\entree{\y+\diag*9}
                    \node at (\y+.5,\x-.5) {\entree};
                    \fi
                    \ifnum\diag=4
                    \fill[blue!, opacity=0.3] (\y,\x-1) rectangle (\y+1,\x);
                    \pgfmathtruncatemacro\entree{\y+\diag*9}
                    \node at (\y+.5,\x-.5) {\entree};
                    \fi
                    \ifnum\diag=5
                    \fill[blue!, opacity=0.3] (\y,\x-1) rectangle (\y+1,\x);
                    \pgfmathtruncatemacro\entree{\y+\diag*9}
                    \node at (\y+.5,\x-.5) {\entree};
                    \fi
                    \ifnum\diag=6
                    \fill[yellow!, opacity=0.3] (\y,\x-1) rectangle (\y+1,\x);
                    \pgfmathtruncatemacro\entree{\y+\diag*9}
                    \node at (\y+.5,\x-.5) {\entree};
                    \fi
                    \ifnum\diag=7
                    \fill[yellow!, opacity=0.3] (\y,\x-1) rectangle (\y+1,\x);
                    \pgfmathtruncatemacro\entree{\y+\diag*9}
                    \node at (\y+.5,\x-.5) {\entree};
                    \fi
                    \ifnum\diag=8
                    \pgfmathtruncatemacro\entree{\y+\diag*9}
                    \node at (\y+.5,\x-.5) {\entree};
                    \fi
                    }{
                    \ifnum\diag=0
                    \fill[red!, opacity=0.3] (\y-2,\x-1) rectangle (\y-1,\x);
                    \pgfmathtruncatemacro\entree{\y+\diag*9}
                    \node at (\y-1.5,\x-.5) {\entree};
                    \fi
                    \ifnum\diag=1
                    \fill[red!, opacity=0.3] (\y-2,\x-1) rectangle (\y-1,\x);
                    \pgfmathtruncatemacro\entree{\y+\diag*9}
                    \node at (\y-1.5,\x-.5) {\entree};
                    \fi
                    \ifnum\diag=2
                    \fill[green!, opacity=0.3] (\y-2,\x-1) rectangle (\y-1,\x);
                    \pgfmathtruncatemacro\entree{\y+\diag*9}
                    \node at (\y-1.5,\x-.5) {\entree};
                    \fi
                    \ifnum\diag=3
                    \fill[green!, opacity=0.3] (\y-2,\x-1) rectangle (\y-1,\x);
                    \pgfmathtruncatemacro\entree{\y+\diag*9}
                    \node at (\y-1.5,\x-.5) {\entree};
                    \fi
                    \ifnum\diag=4
                    \fill[blue!, opacity=0.3] (\y-2,\x-1) rectangle (\y-1,\x);
                    \pgfmathtruncatemacro\entree{\y+\diag*9}
                    \node at (\y-1.5,\x-.5) {\entree};
                    \fi
                    \ifnum\diag=5
                    \fill[blue!, opacity=0.3] (\y-2,\x-1) rectangle (\y-1,\x);
                    \pgfmathtruncatemacro\entree{\y+\diag*9}
                    \node at (\y-1.5,\x-.5) {\entree};
                    \fi
                    \ifnum\diag=6
                    \fill[yellow!, opacity=0.3] (\y-2,\x-1) rectangle (\y-1,\x);
                    \pgfmathtruncatemacro\entree{\y+\diag*9}
                    \node at (\y-1.5,\x-.5) {\entree};
                    \fi
                    \ifnum\diag=7
                    \fill[yellow!, opacity=0.3] (\y-2,\x-1) rectangle (\y-1,\x);
                    \pgfmathtruncatemacro\entree{\y+\diag*9}
                    \node at (\y-1.5,\x-.5) {\entree};
                    \fi
                    \ifnum\diag=8
                    \pgfmathtruncatemacro\entree{\y+\diag*9}
                    \node at (\y-1.5,\x-.5) {\entree};
                    \fi
                    }
                }{
                \ifnum\diag=0
                \fill[red!, opacity=0.3] (\y-1,\x-1) rectangle (\y,\x);
                \pgfmathtruncatemacro\entree{\y+\diag*9}
                \node at (\y-.5,\x-.5) {\entree};
                \fi
                \ifnum\diag=1
                \fill[red!, opacity=0.3] (\y-1,\x-1) rectangle (\y,\x);
                \pgfmathtruncatemacro\entree{\y+\diag*9}
                \node at (\y-.5,\x-.5) {\entree};
                \fi
                \ifnum\diag=2
                \fill[green!, opacity=0.3] (\y-1,\x-1) rectangle (\y,\x);
                \pgfmathtruncatemacro\entree{\y+\diag*9}
                \node at (\y-.5,\x-.5) {\entree};
                \fi
                \ifnum\diag=3
                \fill[green!, opacity=0.3] (\y-1,\x-1) rectangle (\y,\x);
                \pgfmathtruncatemacro\entree{\y+\diag*9}
                \node at (\y-.5,\x-.5) {\entree};
                \fi
                \ifnum\diag=4
                \fill[blue!, opacity=0.3] (\y-1,\x-1) rectangle (\y,\x);
                \pgfmathtruncatemacro\entree{\y+\diag*9}
                \node at (\y-.5,\x-.5) {\entree};
                \fi
                \ifnum\diag=5
                \fill[blue!, opacity=0.3] (\y-1,\x-1) rectangle (\y,\x);
                \pgfmathtruncatemacro\entree{\y+\diag*9}
                \node at (\y-.5,\x-.5) {\entree};
                \fi
                \ifnum\diag=6
                \fill[yellow!, opacity=0.3] (\y-1,\x-1) rectangle (\y,\x);
                \pgfmathtruncatemacro\entree{\y+\diag*9}
                \node at (\y-.5,\x-.5) {\entree};
                \fi
                \ifnum\diag=7
                \fill[yellow!, opacity=0.3] (\y-1,\x-1) rectangle (\y,\x);
                \pgfmathtruncatemacro\entree{\y+\diag*9}
                \node at (\y-.5,\x-.5) {\entree};
                \fi
                \ifnum\diag=8
                \pgfmathtruncatemacro\entree{\y+\diag*9}
                \node at (\y-.5,\x-.5) {\entree};
                \fi
                }
            }
        }
            \node[above] at (4.5,0) {N = 9 case};
        \end{scope}
    \end{tikzpicture}
    \caption{Example of a bijection $\phi$ satisfying the above properties represented via labeling the $(i,j)$th entree on $N \times N$ grid the value $\phi^{-1}(i,j)$.}\label{fig2}
\end{figure}

Recall from section \ref{sec:intro} that \textbf{DDLS}$(N)$ exist for $N \ge 4$ and are trivially \textbf{MMS}$(K,N)$ which satisfy this column condition. Hence we are done and have established Theorem \ref{MainThm}.

\pagebreak
\bibliographystyle{amsbracket}
\providecommand{\bysame}{\leavevmode\hbox to3em{\hrulefill}\thinspace}

\end{document}